\newtheorem{theorem}{Theorem}[section]
\newtheorem{lemma}[theorem]{Lemma}
\newtheorem{conjecture}[theorem]{Conjecture}
\newtheorem{obs}[theorem]{Observation}
\newtheorem{prop}[theorem]{Proposition}
\newtheorem{notation}[theorem]{Notation}
\newtheorem{fact}[theorem]{Fact}
\newtheorem{example}[theorem]{Example}
\newtheorem{remark}[theorem]{Remark}
\newtheorem{corollary}[theorem]{Corollary}
\newcommand{\F}{\mathbb{F}}
\newcommand{\I}{\mathcal{I}}
\newcommand{\J}{\mathcal{J}}
\newcommand{\K}{\mathcal{K}}
\newcommand{\ml}{\mathcal{L}}
\newcommand*{\rom}[1]{\expandafter\@slowromancap\romannumeral #1@}
\def\imod#1{\allowbreak\mkern5mu({\operator@font mod}\,\,#1)}
\title[On the WLP for ideals generated by powers of general linear forms]{On the Weak Lefschetz Property for ideals generated by powers of general linear forms}
\author[Matthew D. Booth]{Matthew D. Booth}
\author[Pankaj Singh]{Pankaj Singh}
\author[Adela Vraciu]{Adela Vraciu}
\address{Department of Mathematics, University of South Carolina, 
Columbia, SC 29208}
\email{mdbooth@email.sc.edu}
\email{pksingh@email.sc.edu}
\email{vraciu@math.sc.edu}
\date{\today}
\subjclass[2020]{13D02, 13E10, 13C40.}
\keywords{Almost complete intersections, Artinian rings, Hilbert function, Weak Lefschetz property}
\begin{document}

\begin{abstract} 
We provide a description of initial ideals for almost complete intersections generated by powers of general linear forms and prove that WLP in a fixed degree $d$ holds when the number of variables $n$ is sufficiently large compared to $d$. In particular, we show that if $n\geq 3d-2$ then WLP holds for the ideal generated by squares at the degree $d$ spot and for $n\ge \frac{3d-3}{2}$ WLP holds for ideal generated by cubes at the degree $d$ spot. Finally, we prove that WLP fails for the ideal generated by squares when $n< 3d -2$ at the $d$th spot by finding an explicit element in the kernel of the multiplication by a general linear form. This shows that our bound on $n$ is sharp in the case of the squares.
\end{abstract}

\maketitle
\tableofcontents

\section{Introduction}

Let $\F$ be a field of characteristic zero, $P_n=\F[x_1,\dots,x_n]$, and $A=P_n/I$ where $I$ is an Artinian monomial complete intersection. Given a general linear form $L\in A_1$, it was first proven in \cite{Stanley-80} (see also \cite{Watanabe}) that the homomorphism $\times L^p:A_{d-p}\rightarrow A_{d}$ has maximal rank (i.e. is either injective or surjective) for all positive integers $d$ and $p$. Any standard graded Artinian algebra satisfying this condition is said to have the \textit{Strong Lefschetz Property} (\textit{SLP}). An interesting and fruitful offshoot is the following weakening of the SLP: if $A$ is a standard graded Artinian algebra, then we say $A$ has the \textit{Weak Lefschetz Property} (\textit{WLP}) if the homomorphism $\times L^p:A_{d-p}\rightarrow A_{p}$ has maximal rank for $p=1$ and all positive integers $d$. Since its introduction, the Weak Lefschetz Property has been examined from a variety of angles. An excellent overview of its development can be found in \cite{Migliore-Nagel}.

One expects the Weak Lefschetz Property to hold for ``most'' algebras, so it is interesting to find classes of Artinian algebras where it fails. It is shown in \cite{Har-Mig-Nag-Wat} that all homogeneous ideals in $\F[x_1,x_2]$ enjoy SLP (hence also WLP) and that all Artinian complete intersections in $\F[x_1,x_2,x_3]$ enjoy WLP. Moreover, it is conjectured (see, for instance, the last few lines in \cite{Rei-Rob-Roi}) that any complete intersection over a field of characteristic zero has WLP, but this question remains open outside some special cases. Indeed, as the number of variables increases, deciding whether a given Artinian algebra has WLP is generally a rather difficult question. Section 2 of \cite{Mig-Mir-Nag-11} provides some tools to limit the number of required verifications, but even then only a handful of defining ideal formats are subsequently dealt with there. One of the results from \cite{Mig-Mir-Nag-11} that is relevant to the work in this paper is Proposition (2.1) part (b): given a level Artinian standard graded algebra, if injectivity holds for multiplication by a general linear form at some degree $d_0$, then injectivity also holds for all degrees $d\leq d_0$. (There is also a similar statement regarding surjectivity). This means that when we encounter an Artinian algebra which fails WLP, it is interesting to ask for the smallest degree where injectivity fails.

A case which has received particular attention recently is that of an ideal generated by $n+1$ general linear forms raised to a uniform power, which is to say $I=(\ell_1^p,\dots,\ell_{n+1}^p)$. We note that via a change of coordinates, the $n+1$ general linear forms can be taken to be $x_1, \ldots, x_n, x_1+\cdots + x_n$. It was conjectured in \cite{Har-Sch-Sec} that $A=\F[x_1,\dots,x_n]/(\ell_1^p,\dots,\ell_{n+1}^p)$ fails to have WLP if $n$ is sufficiently large. The case $n=3$ was settled by Theorem 2.4 in \cite{Schenck-Seceleanu}, where it is shown that WLP holds for any Artinian quotient of $\F[x_{1},x_{2},x_{3}]$ generated by powers of linear forms. In another direction, the case $p=2$ was settled by Theorem 2.12 in \cite{MiroRoig}, which states that $A$ enjoys WLP precisely when either $2\leq n\leq 5$ or $n=7$ holds. A partial general resolution to this conjecture was provided in Theorem 6.1 of \cite{Mig-Mir-Nag-12} (when $n\geq 6$ is even), and a complete classification followed in Theorem 1.1 of \cite{Boij-Lundqvist}. The content of the latter theorem is that $A$ as defined above enjoys WLP precisely when one of $n\leq 3$, $p=1$, or $(n,p)\in\big\{(4,2),\,(5,2),\,(5,3),\,(7,2)\big\}$ holds.

Given a general linear form $L$, showing failure of the WLP for an algebra comes down to finding a degree where the $\times L$ map fails to be injective or surjective. The results of \cite{Mig-Mir-Nag-12} and \cite{Boij-Lundqvist} show failure of surjectivity in some degree. While these results settle the question of whether WLP holds for such algebras, the present paper represents a sort of counterpoint in the sense that our results concern failure of injectivity when multiplying by a general linear form. In particular, we provide a lower bound on the earliest degree for which WLP can fail, and we improve this bound when the defining ideal is generated by squares or cubes. We conclude by showing that the bound we obtain in the square case is sharp.

\smallskip

We set some notation: $P_n=\F[x_1,\dots,x_n]$, and  $I_{n,a}=\big(x_1^a,\dots,x_n^a,(x_1+\cdots+x_n)^a\big)$. For a graded ring $R$  the notation $R_d$ indicates the $d$th graded piece of $R$. Similarly, if $I$ is a homogeneous ideal in a graded ring, $I_d$ is the $d$th graded piece of $I$.

Our general result for the lower bound is the following:

\begin{theorem}
Let $n, a, d$ be positive integers with $d\ge a\ge 2$, and define $N(a,d)=\displaystyle \left\lceil\frac{2d}{a-1}\right\rceil$. If $\displaystyle n \ge N(a, d)+\frac{2d-1}{a-1}$, then the map given by multiplication by a general linear form from $(P_n/I_{n, a})_{d-1}$ to $(P_n/I_{n,a})_d$ is injective.
\end{theorem}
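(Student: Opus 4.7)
The plan is to work inside the Artinian monomial complete intersection $R = P_n/(x_1^a,\ldots,x_n^a)$, which satisfies the Strong Lefschetz Property by Stanley--Watanabe, and to view $A$ as the quotient $A = R/(L_0^a R)$, where $L_0 = x_1+\cdots+x_n$. The hypothesis $n \geq N(a,d)+\tfrac{2d-1}{a-1}$ forces $n(a-1) \geq 4d-1$, which places all graded pieces of $R$ of degree at most $2d-1$ strictly below the middle degree $s/2$ of $R$ (where $s = n(a-1)$). SLP will then give the injectivity of $\times L\colon R_{e-1}\to R_e$ for every $e\leq 2d-1$.

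An element of $\ker(\times L\colon A_{d-1}\to A_d)$ lifts to some $f\in R_{d-1}$ with $Lf=L_0^a g$ in $R$ for some $g\in R_{d-a}$, and the goal becomes to produce $h$ with $f = L_0^a h$. The crucial reduction is the following: if one can show $g\in L R_{d-a-1}$, then writing $g = Lg'$ gives $L(f - L_0^a g')=0$ in $R_d$, so the injectivity of $\times L$ on $R_{d-1}$ forces $f = L_0^a g'$. This shifts the question to proving that $\times L_0^a\colon (R/(L))_{d-a} \to (R/(L))_d$ is injective.

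A change of variables eliminating the coordinate picked out by $L$ identifies $R/(L) \cong P_{n-1}/(y_1^a,\ldots,y_{n-1}^a,\ell^a)\cong P_{n-1}/I_{n-1,a}$, and under this isomorphism $L_0$ goes to a general linear form $\ell'$. Writing $\times \ell'^a = (\times \ell')^a$, its injectivity at degree $d-a$ reduces to the injectivity of $\times \ell'\colon(P_{n-1}/I_{n-1,a})_{e-1}\to(P_{n-1}/I_{n-1,a})_e$ for each $e\in\{d-a+1,\ldots,d\}$, which is precisely the theorem applied to a smaller ring at intermediate degrees. This structure invites a proof by induction on $d$, with the base case $d<a$ immediate because $A_e = R_e$ there and SLP of $R$ applies directly.

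The main obstacle is the numerical matching at the extremal degree $e=d$: one needs $n-1 \geq N(a,d)+\tfrac{2d-1}{a-1}$ for the induction to close, but the given hypothesis only provides $n \geq N(a,d)+\tfrac{2d-1}{a-1}$, a gap of one variable. Closing this gap is the delicate technical step, and it will require a careful analysis of the ceiling function appearing in $N(a,d)$; in the tight cases one may need additional input, such as exploiting the specific form of $\ell'$ as the image of $L_0$ under a particular change of variables (rather than an arbitrary general linear form in $P_{n-1}$), or refining the SLP-based injectivity argument on $R$ to bypass the recursion at the boundary degree altogether.
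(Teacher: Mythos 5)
Your overall strategy is genuinely different from the paper's: the paper never touches the SLP of $R=P_n/(x_1^a,\dots,x_n^a)$ directly, but instead passes to the initial ideal of $I_{n,a}$ in reverse lexicographic order (Theorem (\ref{general bound})), reduces to a monomial ideal via Wiebe's Proposition (\ref{wiebe}), and then kills each coefficient of a kernel element by exhibiting a non-member multiple of degree $\ge 2d-1$ and invoking Lemma (\ref{ayden}). Your descent through $R/(L)\cong P_{n-1}/I_{n-1,a}$ is an attractive alternative, and the individual reductions you describe are sound: if $\times L_0^a\colon (R/(L))_{d-a}\to(R/(L))_d$ were injective, then indeed $g=Lg'$, $L(f-L_0^ag')=0$ in $R_d$, and the SLP of $R$ (valid here since $n(a-1)\ge 4d-1$ puts degree $d$ below half the socle degree) would force $f=L_0^ag'$.

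However, the proposal has a genuine, unclosed gap, which you yourself identify but do not resolve. Factoring $\times L_0^a$ into $a$ single multiplications requires injectivity of $\times\ell'\colon (P_{n-1}/I_{n-1,a})_{e-1}\to(P_{n-1}/I_{n-1,a})_e$ for every $e\in\{d-a+1,\dots,d\}$, and the case $e=d$ is the statement of the theorem itself in $n-1$ variables at the \emph{same} degree $d$. This breaks the induction on $d$ (the degree does not drop), and an induction on $n$ cannot start, because at the smallest $n$ satisfying $n\ge N(a,d)+\frac{2d-1}{a-1}$ the ring in $n-1$ variables fails the hypothesis by exactly one variable. Since the bound in the theorem is not known to be off by one in general (it is only known to be non-sharp for $a=2,3$ via separate, harder arguments), there is no available input to close the boundary case, and "a careful analysis of the ceiling function" will not produce it: the deficit of one variable is structural, not an artifact of rounding. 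A secondary issue you gloss over is that the image $\ell'$ of $L_0$ in $R/(L)$ is a \emph{specific} linear form determined by $L$ after the coordinate normalization of the $n+1$ forms defining $I_{n-1,a}$; showing that the resulting configuration lands in the open locus where injectivity holds requires a dimension/genericity argument that is not supplied. As it stands, the proposal is a plausible plan with its crucial step missing, not a proof.
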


As previously mentioned, this bound is not sharp. When $a=2$ (i.e. the defining ideal is generated by squares), our bound gives $n\geq 4d-1$, but we will show in Theorem (\ref{injectivity for squares}) that $n\geq 3d-2$ is enough to guarantee injectivity from degree $d-1$ to degree $d$ in this case. Moreover, $n\geq 3d-2$ is sharp for an ideal generated by squares per Theorem (\ref{square injectivity failure}), whose content is the failure of injectivity from degree $d-1$ to degree $d$ when $n\leq 3d-3$. In a similar way, $a=3$ gives the bound $n\geq 2d$, but Theorem (\ref{inj for cubes}) shows $n\geq\frac{3d-3}{2}$ will suffice to guarantee injectivity from degree $d-1$ to degree $d$ when the defining ideal is generated by cubes.

The presence of the Weak Lefschetz Property is closely connected to the famous conjecture of Fr\"{o}berg predicting the Hilbert series of a quotient of a polynomial ring by general forms of specified degrees. More precisely, if $R=\F[x_1, \ldots, x_n]/(f_1, \ldots, f_r)$ with $f_1, \ldots, f_r$ general forms of respective degrees $d_1, \ldots, d_r$, Fr\"{o}berg's conjecture states that 
\begin{equation}\label{froberg}
\mathrm{HS}(R, t)=\left[\frac{\prod_{k=1}^r (1-t^{d_k})}{(1-t)^n}\right]
\end{equation}
where $\mathrm{HS}(R, t)=\sum_{d=0}^{\infty} \mathrm{dim}_{\F} (R_d)t^d$ and $[\ \ \ \  ]$ denotes truncation before the first non-positive term.
We will say that (\ref{froberg}) holds for $R$ up to degree $D$ if 
\begin{equation}
\big[\mathrm{HS}(R, t)\big]_D=\left[\frac{\prod_{k=1}^r (1-t^{d_k})}{(1-t)^n}\right]_D
\end{equation}
where $[\ \ \ ]_D$ means truncation at the $t^D$ term or before the first non-positive term, whichever is sooner.

It is shown in \cite{Froberg} that the inequality $\ge $ in (\ref{froberg}) is always true for any $f_1, \ldots, f_r$ of degrees $d_1, \ldots, d_r$, and the existence of one choice of such $f_1, \ldots f_r$ where equality holds implies that equality holds for general forms. 
Although the conjecture is widely believed to be true, there are only a few instances in which it is known. In particular, the Conjecture (\ref{froberg}) is true when $r=n+1$ (see \cite{Stanley-78}) as well as when $(d,n)= (2, 12), (2, 13), (3, 9)$ (see \cite{froberg2024}). We refer the interested reader to the survey \cite{Froberg-Lundqvist} for more information.\vspace{12pt}

\indent The following observation will allow us to derive some cases where the Fr\"oberg conjecture holds up to a certain degree as corollaries of our results about injectivity.
\begin{obs}\label{Froberg}
Let $R=P_n/(f_1, \ldots, f_r)$ where $f_1, \ldots, f_r$ are forms of degrees $d_1, \ldots, d_r$, and let $L$ be a linear form. Assume that (\ref{froberg}) holds for $R$ up to degree $D$, for some $D$ with $R_D\ne 0$.

Then the map given by multiplication by $L:R_{d-1}\to R_d$ has maximal rank for all $d\le D$ if and only if (\ref{froberg}) holds for $R/(L)$ up to degree $D$, where we view $R/(L)$ as the quotient of $P_{n-1}$ by $r$ forms of degrees $d_1, \ldots, d_r$.
\end{obs}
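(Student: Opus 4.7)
The plan is to work directly with the right exact sequence $R_{d-1}\xrightarrow{\times L} R_d \to (R/(L))_d\to 0$, which gives
\[ \dim_\F (R/(L))_d \;=\; \dim_\F R_d - \dim_\F R_{d-1} + \dim_\F \ker\bigl(\times L\colon R_{d-1}\to R_d\bigr). \]
The second essential input is that if $H(t)=\prod_k(1-t^{d_k})/(1-t)^n$ is the Fr\"oberg candidate for $R$, the corresponding candidate for $R/(L)$ viewed as a quotient of $P_{n-1}$ by $r$ forms of the same degrees is $H'(t)=(1-t)H(t)$. Writing $h_d=[t^d]H(t)$ and $h'_d=[t^d]H'(t)=h_d-h_{d-1}$, the hypotheses on $R$ force $h_d>0$ and $\dim_\F R_d=h_d$ for every $d\le D$, so the exact sequence becomes $\dim_\F(R/(L))_d = h'_d + \dim_\F \ker(\times L)$ on $[1,D]$.

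\smallskip

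For the forward direction I will fix $d\le D$ and unpack maximality of $\times L$: when $h'_d>0$ injectivity is the only option, yielding $\dim_\F(R/(L))_d=h'_d$, and when $h'_d\le 0$ surjectivity is the only option, yielding $\dim_\F(R/(L))_d=0$. Together these give $\dim_\F(R/(L))_d=\max(h'_d,0)$ on $[0,D]$. A separate ingredient is then needed to match this to the ``truncate-at-first-non-positive'' formula built into Fr\"oberg's conjecture: since $R/(L)$ is standard graded, vanishing at some degree $d_0$ propagates to all higher degrees, so once surjectivity first occurs every subsequent $h'_d$ must also be non-positive. This makes $\max(h'_d,0)$ and the Fr\"oberg truncation $[H'(t)]_D$ agree on $[0,D]$, giving Fr\"oberg for $R/(L)$ up to degree $D$.

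\smallskip

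For the reverse direction I will read the exact sequence backwards: letting $d_0$ be the smallest index in $[1,D]$ with $h'_{d_0}\le 0$ (or $+\infty$ if none exists), the Fr\"oberg-for-$R/(L)$ hypothesis gives $\dim_\F(R/(L))_d=h'_d$ for $d<d_0$, which forces $\dim_\F\ker(\times L)=0$ (injectivity), and gives $\dim_\F(R/(L))_d=0$ for $d_0\le d\le D$, which forces $LR_{d-1}=R_d$ (surjectivity); either way $\times L$ has maximal rank at $d$. I expect the only genuinely subtle step to be the forward-direction use of degree-one generation of $R/(L)$ described above, needed to reconcile the pointwise ``$\max$'' formula coming from maximal rank with the truncation convention in Fr\"oberg's conjecture; everything else reduces to bookkeeping via the exact sequence.
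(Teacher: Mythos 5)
Your argument is correct and follows the same route as the paper's own proof: both rest on the identity $\dim_{\F}\big((R/(L))_d\big)=\dim_{\F}(R_d)-\dim_{\F}(R_{d-1})+\dim_{\F}\ker(\times L)$ coming from the cokernel presentation of $R/(L)$, together with the observation that the Fr\"oberg candidate for $R/(L)$ is $(1-t)$ times that for $R$. The paper states this more tersely and leaves implicit the reconciliation between the pointwise $\max$ and the truncate-before-the-first-non-positive-term convention, which you correctly supply via the standard-graded fact that $(R/(L))_{d_0}=0$ forces vanishing in all higher degrees.
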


\begin{proof}
Since $R/(L)$ is the cokernel of this map, injectivity of the map given by multiplication by $L: R_{d-1}\to R_d$ is equivalent to 
$$
\mathrm{dim}_{\F}((R/(L))_d)=\mathrm{max}\{\mathrm{dim}_{\F}(R_d)-\mathrm{dim}_{\F}(R_{d-1}), 0\}.
$$
Therefore, injectivity for all $d\le D$ is equivalent to 
$$
\big[\mathrm{HS}(R/(L), t)\big]_D=\big[(1-t)\mathrm{HS}(R, t)\big]_D.
$$
\end{proof}

In particular, since WLP holds for monomial complete intersections by \cite{Stanley-80}, we have the following:
\begin{remark}\label{HF of ACI}
$$
\mathrm{dim}\left(\left(\frac{P_n}{I_{n, a}}\right)_d\right)=\mathrm{max}\{\mathrm{dim}\left(\left(\frac{P_{n+1}}{(x_1^a, \ldots, x_{n+1}^a)}\right)_d\right)-\mathrm{dim}\left(\left(\frac{P_{n+1}}{(x_1^a, \ldots, x_{n+1}^a)}\right)_{d-1}\right), 0\}.
$$
\end{remark}

This disparity between the general bound of Theorem 1.1 and the bounds obtained in the cases of $a=2$ and $a=3$ is due to our understanding of the initial ideal in each case. Indeed, the proofs of our results concerning injectivity share a similar structure in which we first find the initial ideal with respect to the reverse lexicographic order. The cases $a=2$ and $a=3$ permit explicit descriptions per Theorems (\ref{initial2}) and (\ref{initial3}), respectively; the corresponding description in Theorem (\ref{general bound}) for an arbitrary exponent $a$ is less precise. We shall outline this structure in more detail after recording two results which are critical for our arguments. The first is due to Wiebe (and is a quick consequence of Theorem 1.1 in \cite{Conca}):

\begin{prop}[\cite{Wiebe}, Proposition 2.9]\label{wiebe}
Let $I\subseteq P_n$ be an $\mathfrak{m}$-primary graded ideal where $\mathfrak{m}=(x_1,\dots,x_n)$, and fix any term order $\sigma$. If $P_n/\text{in}_{\sigma}(I)$ has WLP (resp. SLP), then $P_n/I$ also enjoys WLP (resp. SLP).
\end{prop}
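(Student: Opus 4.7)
The plan is to combine two facts about passing to the initial ideal: the Hilbert function is preserved, and the dimensions of the kernels of multiplication-by-linear-form maps can only grow. Once both are in hand, WLP/SLP transfers from $P_n/\mathrm{in}_{\sigma}(I)$ up to $P_n/I$, since \emph{maximal rank} is the same numerical condition on the two sides.

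First I would record the classical Gr\"obner basis identity $\dim_{\F}(P_n/I)_d = \dim_{\F}(P_n/\mathrm{in}_{\sigma}(I))_d$ for every $d$, so the numerical target for maximal rank in either quotient is the same. Next I would invoke Conca's Theorem 1.1, whose content is the existence of a one-parameter flat family over $\F[t]$ (built from a weight vector refining $\sigma$) with generic fiber $P_n/I$ and special fiber $P_n/\mathrm{in}_{\sigma}(I)$. Upper semicontinuity of the dimension of the kernel of a graded linear map along such a flat family then yields, for any linear form $L \in P_n$ with specialization $\ell$ at $t=0$,
$$\dim_{\F}\ker\!\big(\times L : (P_n/I)_{d-1} \to (P_n/I)_d\big) \le \dim_{\F}\ker\!\big(\times \ell : (P_n/\mathrm{in}_{\sigma}(I))_{d-1} \to (P_n/\mathrm{in}_{\sigma}(I))_d\big),$$
and the same statement with $L^p$ and $\ell^p$ in place of $L$ and $\ell$.

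To conclude, if $P_n/\mathrm{in}_{\sigma}(I)$ has WLP, fix a Lefschetz linear form $\ell$ for it and choose $L \in P_n$ specializing to $\ell$ in the Conca family. The kernel inequality, combined with the Hilbert function equality, forces $\times L$ to have kernel dimension no larger than that of $\times \ell$, which is already the minimum possible; hence $\times L$ has maximal rank in every degree and $L$ is a Lefschetz element for $P_n/I$. The SLP statement follows from the same argument applied to each power $L^p$.

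The main obstacle is the flatness and semicontinuity input: verifying that the weight-order Gr\"obner deformation is $\F[t]$-flat and that kernel dimensions of a graded map of flat $\F[t]$-modules are upper semicontinuous in $t$. This is precisely the content packaged into Conca's theorem, so the real work is to cite it correctly; everything else is routine linear algebra. An alternative route, used in Wiebe's original argument, works only for revlex and relies on the identity $\mathrm{in}_{\sigma}(I:x_n) = \mathrm{in}_{\sigma}(I):x_n$, which makes the kernel dimensions agree on the nose for $\ell = x_n$ but does not handle an arbitrary $\sigma$.
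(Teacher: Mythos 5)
This proposition is quoted from Wiebe (and attributed to Conca's Theorem~1.1); the paper itself supplies no proof, so there is nothing internal to compare against. Your argument is correct and is essentially the standard one underlying that citation: the Gr\"obner degeneration gives a flat family over $\F[t]$ with special fiber $P_n/\mathrm{in}_{\sigma}(I)$ and general fiber isomorphic to $P_n/I$ (via the torus rescaling), the Hilbert functions agree by Macaulay's theorem, and lower semicontinuity of the rank of $\times \ell^p$ (equivalently, upper semicontinuity of the kernel dimension at $t=0$) transfers maximal rank from the special fiber to the general one; since maximal rank for one linear form implies it for a general linear form over an infinite field, WLP and SLP follow for $P_n/I$.

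Two small points. First, your attribution is slightly off: Conca's Theorem~1.1 is not the existence of the flat family (that is the standard Gr\"obner degeneration, e.g.\ Eisenbud, Theorem~15.17); it is the resulting dimension inequality
$\dim_{\F}\bigl(P_n/(I+(\ell_1,\dots,\ell_i))\bigr)_j \le \dim_{\F}\bigl(P_n/(\mathrm{in}_{\sigma}(I)+(\ell_1,\dots,\ell_i))\bigr)_j$
for generic linear forms, which for $i=1$ is exactly your kernel inequality once the Hilbert functions are matched. Second, be careful with the phrase ``choose $L$ specializing to $\ell$'': in the degeneration the linear form is constant in $t$, and at $t\ne 0$ the map $\times\ell$ on the fiber corresponds to multiplication by a torus translate of $\ell$ on $P_n/I$; this is harmless precisely because WLP/SLP are statements about a \emph{general} linear form, but it is worth saying explicitly. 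Neither point is a gap in the mathematics.
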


Because an initial ideal is a monomial ideal, this result pairs well with our second vital tool due to Eddings and Vraciu:

\begin{lemma}[\cite{Eddings-Vraciu}, Lemma 3.1]\label{ayden}
Let $I\subseteq P_n$ be a monomial ideal, and let $\ell=\sum_{i=1}^n x_i$. Let $Q=\sum_{\mu } a_{\mu } \mu \in P_n$ be a form of degree $d-1$, where the summation is over all monomials $\mu$ of degree $d-1$.  Assume $\ell Q\in I$. Then for every monomial $M$ of degree $2d-1$, if $M\notin I$ then $a_{\mu }=0$ for every $\mu $ of degree $d-1$ such that $\mu $ divides $M$.
\end{lemma}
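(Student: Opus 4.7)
The plan is to convert the system of linear relations imposed on the $a_\mu$'s into a Lefschetz statement for an auxiliary monomial complete intersection and then invoke Stanley's theorem. Write $M = x_1^{e_1}\cdots x_n^{e_n}$ with $\sum_i e_i = 2d-1$, and set $R = P_n/(x_1^{e_1+1}, \ldots, x_n^{e_n+1})$. The monomial basis of $R_{d-1}$ is exactly the collection of degree $d-1$ divisors of $M$, and likewise $R_d$ has as basis the degree $d$ divisors of $M$. I would restrict attention to the relevant part of $Q$ by introducing $Q_M = \sum_{\mu \mid M} a_\mu \mu$, viewed as an element of $R_{d-1}$.

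The key calculation is that $\ell Q_M = 0$ in $R_d$. For any degree $d$ divisor $\nu$ of $M$, the coefficient of $\nu$ in $\ell Q_M$ is $\sum_{i:\, x_i \mid \nu} a_{\nu/x_i}$, which coincides with the coefficient of $\nu$ in the original $\ell Q$ (each $\nu/x_i$ divides $M$, so belongs to the support of $Q_M$). Because $I$ is a monomial ideal and $M\notin I$, every divisor $\nu$ of $M$ is also outside $I$; the hypothesis $\ell Q \in I$ therefore forces this coefficient to vanish, giving $\ell Q_M = 0$ in $R_d$.

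To finish, I would apply the strong Lefschetz property of the monomial complete intersection $R$ (Stanley \cite{Stanley-80}, as cited above) to conclude that $\times \ell: R_{d-1}\to R_d$ has maximal rank. The Hilbert series of $R$ is the palindromic polynomial $\prod_{i}(1 + t + \cdots + t^{e_i})$ of socle degree $2d-1$, so by symmetry $\dim_{\F} R_{d-1} = \dim_{\F} R_d$, and thus maximal rank means this map is an isomorphism, in particular injective. Hence $Q_M = 0$ in $R_{d-1}$, i.e.\ $a_\mu = 0$ for every degree $d-1$ divisor $\mu$ of $M$. The only conceptual step is selecting the correct ambient quotient $R$ (the complete intersection tailored to the exponent vector of $M$, so that the relevant degrees $d-1$ and $d$ are symmetric); once that is in hand, both the translation of the hypothesis into $\ell Q_M = 0$ and the appeal to SLP are essentially bookkeeping.
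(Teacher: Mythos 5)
Your proof is correct, and it is the natural argument: pass to the auxiliary monomial complete intersection $P_n/(x_1^{e_1+1},\dots,x_n^{e_n+1})$ determined by $M$, observe that its socle degree $2d-1$ makes the degrees $d-1$ and $d$ symmetric, and invoke the Stanley--Watanabe theorem (which in characteristic zero gives that $\sum_i x_i$ itself, not merely a general linear form, is a Lefschetz element, so your use of the specific $\ell$ is legitimate). The paper does not prove this lemma itself --- it is quoted from \cite{Eddings-Vraciu} --- so there is no in-paper proof to compare against, but your route is essentially the one in that source.
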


The outline of Sections 2, 3, and 4 can now be provided. After fixing the notation, the first step will be a description of the initial ideal with respect to the reverse lexicographic order. Then we use the Lemma (\ref{ayden}) to prove that injectivity holds for the initial ideal, whence the conclusion follows from Proposition (\ref{wiebe}). Section 5 is of an altogether different nature, constructing a nonzero element in the kernel of multiplication by a general linear form when the defining ideal is generated by squares.

We next record two more standard results that will be needed at various points. Their statements have been paraphrased from the source material to suit the context of this paper.

\begin{theorem}[\cite{Rei-Rob-Roi}, Theorem 1]\label{CI H-func & soc deg}
The Hilbert function of a complete intersection is symmetric about half the socle degree. Moreover, the Hilbert function is increasing until half the socle degree and is decreasing afterward.
\end{theorem}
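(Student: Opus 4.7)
The plan is to derive both claims from the explicit form of the Hilbert series of a complete intersection together with Stanley's Strong Lefschetz theorem for monomial complete intersections, cited at the start of Section 1 of this paper. The first step is to recall that for any Artinian complete intersection $A=\F[x_1,\ldots,x_n]/(f_1,\ldots,f_n)$ with $\deg f_i = d_i$, the forms $f_1,\ldots,f_n$ are a regular sequence, so the Koszul complex on them resolves $A$. Taking alternating sums of graded Hilbert series gives
\[
\mathrm{HS}(A,t)=\prod_{i=1}^n\frac{1-t^{d_i}}{1-t}=\prod_{i=1}^n\bigl(1+t+\cdots+t^{d_i-1}\bigr),
\]
which in particular depends only on the multiset $(d_1,\ldots,d_n)$.

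For the symmetry claim, note that each factor $1+t+\cdots+t^{d_i-1}$ is palindromic about $(d_i-1)/2$, so the product is palindromic about $s/2$ with $s:=\sum_{i=1}^n(d_i-1)$. Since the coefficient of $t^s$ in $\mathrm{HS}(A,t)$ is $1$ and no higher power of $t$ appears, $s$ is the socle degree, and the palindromicity yields $\dim_{\F}A_j = \dim_{\F}A_{s-j}$ for every $j$.

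For the unimodality I would pass to the monomial complete intersection $B:=\F[x_1,\ldots,x_n]/(x_1^{d_1},\ldots,x_n^{d_n})$, which shares the Hilbert function of $A$ by the first step and enjoys SLP by \cite{Stanley-80}. Fix a general linear form $L$ and an integer $i<s/2$. By SLP the map $\times L^{s-2i}:B_i\to B_{s-i}$ has maximal rank, and by the symmetry just established its source and target have equal dimension, so this map is an isomorphism. Since $\times L^{s-2i}$ factors as $\times L:B_i\to B_{i+1}$ followed by $\times L^{s-2i-1}:B_{i+1}\to B_{s-i}$, the inner map $\times L:B_i\to B_{i+1}$ must itself be injective, so $\dim_{\F}B_i\le \dim_{\F}B_{i+1}$. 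This yields non-decreasing behavior on $[0,\lfloor s/2\rfloor]$, and the symmetry then gives non-increasing behavior on $[\lceil s/2\rceil, s]$.

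The main obstacle is that the argument leans on one heavy external input, namely Stanley's SLP theorem, whose standard proof runs through the hard Lefschetz theorem for smooth projective toric varieties; the Koszul-complex step, by contrast, is routine. A self-contained alternative would amount to proving directly that the product of palindromic, unimodal polynomials with nonnegative integer coefficients is again palindromic and unimodal, a classical but nontrivial fact about log-concave sequences.
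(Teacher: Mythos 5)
The paper does not prove this statement; it is imported verbatim from \cite{Rei-Rob-Roi} as a background fact, so there is no internal argument to compare yours against. That said, your proof is correct. The Koszul resolution gives $\mathrm{HS}(A,t)=\prod_i(1+t+\cdots+t^{d_i-1})$, each factor is palindromic about $(d_i-1)/2$, and a product of palindromic polynomials is palindromic about the sum of the centers, which is half the socle degree $s=\sum_i(d_i-1)$; this settles symmetry. Your unimodality step is also sound: for $i<s/2$ one has $s-2i\ge 1$, the map $\times L^{s-2i}\colon B_i\to B_{s-i}$ is an isomorphism by SLP plus the symmetry just proved, and its factorization through $\times L\colon B_i\to B_{i+1}$ forces that map to be injective, giving the (weak) monotonicity on each side of $s/2$ --- which is the form of the statement the paper actually uses. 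The main point of comparison is your choice of engine: you route unimodality through Stanley's SLP theorem, which in the context of this paper costs nothing extra since \cite{Stanley-80} is already invoked for monomial complete intersections, but whose standard proof is genuinely heavy. The argument in \cite{Rei-Rob-Roi} itself follows the elementary alternative you sketch at the end: a product of symmetric, unimodal polynomials with nonnegative coefficients (with compatible centers of symmetry) is again symmetric and unimodal, a classical combinatorial fact that can be proved by decomposing each factor into a sum of ``symmetric intervals'' $t^a+t^{a+1}+\cdots+t^{b}$ and checking the claim for products of such intervals. That route is self-contained and slightly more general (it never needs the general-linear-form machinery), whereas yours has the virtue of exhibiting the monotonicity as a direct consequence of injectivity of multiplication maps, which is the theme of the rest of the paper.
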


\begin{fact}[\cite{Eisenbud}, Proposition 15.12]\label{revlex props}
Consider $P_n=\F[x_1,\dots,x_n]$ with reverse lexicographic order, and let $I$ be a homogeneous ideal of $P_n$. Then the initial ideal enjoys the following two properties: $\mathrm{in}\big(I+(x_n)\big)=\mathrm{in}(I)+(x_n)$ and $\big(\mathrm{in}(I):_{P_n}x_n\big)=\mathrm{in}(I:_{P_n} x_n)$.
\end{fact}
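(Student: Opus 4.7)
The plan is to reduce both equalities to a single structural feature of the reverse lexicographic order: among monomials of a fixed degree in $P_n$, \emph{every} monomial not involving $x_n$ is strictly larger than \emph{every} monomial divisible by $x_n$. This is immediate from the exponent-difference definition of revlex, since if $m_1$ avoids $x_n$ and $m_2$ is divisible by $x_n$, the last coordinate of the exponent difference $\alpha(m_1) - \alpha(m_2)$ is $-\deg_{x_n}(m_2) < 0$. From this I extract the decomposition principle I actually use: any $h \in P_n$ writes uniquely as $h = f + x_n g$ with $f \in \F[x_1, \ldots, x_{n-1}]$, and then $\mathrm{in}(h) = \mathrm{in}(f)$ when $f \neq 0$, whereas $\mathrm{in}(h) = x_n \cdot \mathrm{in}(g)$ when $f = 0$ (using that revlex respects multiplication, so $\mathrm{in}(x_n g) = x_n\, \mathrm{in}(g)$).

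For the first identity, the containment $\mathrm{in}(I) + (x_n) \subseteq \mathrm{in}(I + (x_n))$ is immediate. For the reverse, I would take $h = h_1 + x_n h_2 \in I + (x_n)$ with $h_1 \in I$, decompose $h_1 = f + x_n g$ according to the principle above, and rewrite $h = f + x_n(g + h_2)$. If $f = 0$, then $\mathrm{in}(h) \in (x_n)$. If $f \neq 0$, then $\mathrm{in}(h) = \mathrm{in}(f) = \mathrm{in}(h_1) \in \mathrm{in}(I)$. Either way, $\mathrm{in}(h) \in \mathrm{in}(I) + (x_n)$, which gives the other containment.

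For the second identity, $\mathrm{in}(I :_{P_n} x_n) \subseteq \mathrm{in}(I) :_{P_n} x_n$ is immediate: if $f \in I : x_n$ then $x_n f \in I$, hence $x_n \mathrm{in}(f) = \mathrm{in}(x_n f) \in \mathrm{in}(I)$. For the reverse, since both sides are monomial ideals it suffices to take a monomial $m$ with $x_n m \in \mathrm{in}(I)$ and produce a witness in $I : x_n$ whose leading term is $m$. Writing $x_n m = \mathrm{in}(h)$ for some $h \in I$ and decomposing $h = f + x_n g$, the fact that $x_n \mid \mathrm{in}(h)$ forces $f = 0$, so $h = x_n g \in I$ gives $g \in I : x_n$; then $x_n \mathrm{in}(g) = \mathrm{in}(h) = x_n m$ yields $\mathrm{in}(g) = m \in \mathrm{in}(I : x_n)$.

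I do not anticipate a genuine obstacle: the whole argument turns on the single deduction that $x_n \mid \mathrm{in}(h)$ forces $f = 0$ in the decomposition $h = f + x_n g$, and this is exactly the content of the key lemma about revlex. It is worth noting that this step fails for other term orders (for instance lex), which is why the fact is stated specifically for reverse lexicographic order.
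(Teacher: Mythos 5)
Your proof is correct, and it is essentially the standard argument: the paper itself gives no proof of this statement, quoting it as a known fact from Eisenbud (Proposition 15.12), and the proof there rests on exactly the observation you isolate, namely that in reverse lexicographic order a homogeneous element whose initial term is divisible by $x_n$ must itself lie in $(x_n)$. One small point of hygiene: your decomposition principle ``$\mathrm{in}(h)=\mathrm{in}(f)$ when $f\neq 0$'' as stated for \emph{arbitrary} $h=f+x_ng$ is false for the degree--refined revlex order (e.g.\ $h=x_1+x_n^2$ has $\mathrm{in}(h)=x_n^2$, not $x_1$), since the degree comparison precedes the exponent comparison; you need $h$ homogeneous so that all monomials being compared have the same degree. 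This costs nothing here because $I$, $I+(x_n)$, and $I:_{P_n}x_n$ are all homogeneous ideals, so their initial ideals are generated by initial terms of homogeneous elements, and in the first identity you may replace $h_1$ and $h_2$ by their graded components of the appropriate degrees --- but the homogeneity hypothesis should be invoked explicitly, as it is precisely where the hypothesis that $I$ is homogeneous enters.
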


\begin{remark}
\normalfont{Shortly after the completion of this paper, we were made aware of \cite{lundqvist} in which Kling, Lundqvist, Mohammadi, Orth, and S\'aenz-de-Cabez\'on independently provided a description of the initial ideal of $(x_1^2,\ldots, x_n^2, (x_1+ \ldots + x_n)^2)$ by computing the Gr\"{o}bner basis for it. In particular, their methodology differs significantly from our proof of Theorem (\ref{initial2}}).
\end{remark}

{\textit{Acknowledgments.}} To conclude the introduction, the authors would like to thank the anonymous referee for providing several useful comments and especially for bringing the results in \cite{CriQ-Lun-Nen} to our attention. The authors also wish to acknowledge the computer algebra system \texttt{Macaulay2} \cite{M2}, which provided valuable assistance in realizing and formulating the results of this paper. Finally, we thank Joshua Cooper for helpful conversations related to the subject of this paper.

\section{A lower bound on the number of variables such that injectivity holds}


The main result of this section is Theorem (\ref{injectivity bound}), in which 
we find an explicit bound depending on $a$ and $d$ (which is linear in $d$ for a fixed exponent $a$) such that the map given by multiplication by a general linear form from $(P_n/I_{n, a})_{d-1}$ to $(P_n/I_{n, a})_d$ is injective whenever $n$ is larger than the bound.

The key ingredient is the following description of the initial ideal of $I_{n, a}$:

\begin{theorem}\label{general bound}
Let $n, a, d$ be positive integers with $d\ge a\ge 2$. Let $$\displaystyle N(a, d):=\left\lceil \frac{2d}{a-1}\right\rceil .$$ For a fixed degree $d$ and all $n\ge N(a, d)$, the minimal generators of degree $\le d$ of the initial ideal of 
$I_{n, a}$ in reverse lexicographic order consist of $x_1^a, \ldots, x_n^a$, and monomials that only involve the variables $x_1, \ldots, x_{N(a, d)}$.
\end{theorem}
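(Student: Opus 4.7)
The plan is to verify the claim by a Hilbert-series comparison against an explicit monomial candidate. Set $N := N(a, d)$, view $\mathrm{in}(I_{N, a})$ as a monomial ideal of $P_N$, and define
\[
J := (x_1^a, \ldots, x_n^a) + \mathrm{in}(I_{N, a}) P_n.
\]
By construction, the minimal generators of $J$ are $x_1^a, \ldots, x_n^a$ together with the minimal generators of $\mathrm{in}(I_{N, a})$, all of which involve only $x_1, \ldots, x_N$. Hence the theorem follows once we establish $J_e = \mathrm{in}(I_{n, a})_e$ for every $e \leq d$.

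First, for the inclusion $J \subseteq \mathrm{in}(I_{n, a})$, the powers $x_i^a$ are already in $I_{n, a}$. For the generators coming from $\mathrm{in}(I_{N, a})$, combine Fact \ref{revlex props} with the identity $I_{k+1, a} + (x_{k+1}) = I_{k, a} + (x_{k+1})$ (obtained by setting $x_{k+1} = 0$ in the defining generators of $I_{k+1, a}$). Iterating from $k = n-1$ down to $k = N$ yields
\[
\mathrm{in}(I_{n, a}) + (x_{N+1}, \ldots, x_n) = \mathrm{in}(I_{N, a}) + (x_{N+1}, \ldots, x_n),
\]
so every minimal generator of $\mathrm{in}(I_{N, a})$---supported only on $x_1, \ldots, x_N$---lies in $\mathrm{in}(I_{n, a})$.

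The main step is the Hilbert-series comparison. Since $\mathrm{in}(I_{N, a})$ uses only $x_1, \ldots, x_N$, $P_n/J$ decomposes as a graded tensor product
\[
P_n/J \cong \bigl(P_N/\mathrm{in}(I_{N, a})\bigr) \otimes_\F \F[x_{N+1}, \ldots, x_n]/(x_{N+1}^a, \ldots, x_n^a),
\]
giving $\mathrm{HS}(P_n/J, t) = \mathrm{HS}(P_N/I_{N, a}, t) \cdot \bigl((1-t^a)/(1-t)\bigr)^{n-N}$. The definition $N = \lceil 2d/(a-1) \rceil$ implies $(N+1)(a-1)/2 > d$, placing $d$ strictly below half the socle degree of the monomial complete intersection $P_{N+1}/(x_1^a, \ldots, x_{N+1}^a)$. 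Invoking Remark \ref{HF of ACI} together with Theorem \ref{CI H-func & soc deg} gives $\mathrm{HS}(P_N/I_{N, a}, t) = (1-t^a)^{N+1}/(1-t)^N$ through degree $d$, and likewise $\mathrm{HS}(P_n/I_{n, a}, t) = (1-t^a)^{n+1}/(1-t)^n$ through degree $d$. Multiplying the former by $\bigl((1-t^a)/(1-t)\bigr)^{n-N}$ telescopes to the latter, so $\mathrm{HS}(P_n/J, t)$ and $\mathrm{HS}(P_n/\mathrm{in}(I_{n, a}), t)$ agree in all degrees $\leq d$.

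Combined with $J \subseteq \mathrm{in}(I_{n, a})$, this equality of Hilbert functions forces $J_e = \mathrm{in}(I_{n, a})_e$ for every $e \leq d$; any minimal generator of $\mathrm{in}(I_{n, a})$ of degree at most $d$ must then be a minimal generator of $J$ itself, giving the claimed description. The main hurdle is the choice of the candidate $J$ together with verifying that the formula $N(a, d) = \lceil 2d/(a-1) \rceil$ is precisely sharp enough to keep degrees $\leq d$ in the strictly increasing range of the ambient complete intersection's Hilbert function, which is exactly what makes the Hilbert-series product telescope cleanly.
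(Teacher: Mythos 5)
Your proposal is correct and follows essentially the same route as the paper: the same candidate ideal $K_n+(x_1^a,\dots,x_n^a)$, the same containment into $\mathrm{in}(I_{n,a})$, the same tensor-product decomposition isolating $\F[x_j]/(x_j^a)$ for $j>N(a,d)$, and the same inputs (Remark (\ref{HF of ACI}) plus Theorem (\ref{CI H-func & soc deg}) via the observation that $d<(N+1)(a-1)/2$ keeps all degrees $\le d$ in the nondecreasing range). The only difference is presentational: where the paper proves the dimension equality by showing $E(n,d)$ and $F(n,d)$ satisfy the recurrence $E(n,d)=\sum_{i=0}^{a-1}E(n-1,d-i)$ and inducting on $n$ from the base case $n=N(a,d)$, you telescope the closed-form Hilbert series $(1-t^a)^{n+1}/(1-t)^n$ directly, which is the generating-function form of the same induction.
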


In the following proof, the notation $P_{n,d}$ refers to the $d^{\text{th}}$ graded piece of $P_{n}$.

\begin{proof}
Let $K\subseteq \F[x_1, \ldots, x_{N(a, d)}]$ denote the initial ideal of $I_{N(a, d), a}$ and let $K_n$ denote the ideal of $\F[x_1, \ldots,x_n]$ generated by the image of $K$ under the natural inclusion for $n\ge N(a, d)$. The containment $K_n+(x_1^a, \ldots, x_n^a)\subseteq \mathrm{in}(I_{n, a})$ is clear from the definition of $K_n$.

Since $I_{n, a}$ and $\mathrm{in}(I_{n, a})$ have the same Hilbert function, the conclusion will follow once we prove that 
\begin{equation}\label{equ}
\mathrm{dim}\left(\frac{P_{n,d}}{(K_n+(x_{N(a, d)+1}^a, \ldots, x_n^a))_d}\right)=\mathrm{dim} \left(\frac{P_{n,d}}{(I_{n, a})_d}\right)
\end{equation}
For a fixed $a$, let $E(n, d)$ denote the left-hand side of (\ref{equ}) and $F(n, d)$ denote the right-hand side.

The two sides of equation (\ref{equ}) are the same for $n=N(a, d)$  by definition. We claim that both satisfy the same recurrence relation as functions of $n$ (for fixed $a$ and $d$) for $n\ge N(a, d)$.

Since the generators of $K_n$ only involve the variables $x_1, \ldots, x_{N(a, d)}$, we have
$$\frac{P_n}{K_n+(x_{N(a, d)+1}^a, \ldots, x_n^a)}= \frac{\F[x_1, \ldots, x_{n-1}]}{K_n+(x_{N(a, d)+1}^a, \ldots, x_{n-1}^a)}\otimes_{\F} \frac{\F[x_n]}{(x_n^a)}.
$$
Using a standard property of the tensor product, for each $d\ge 0$ we have
$$\left(\frac{P_n}{K_n+(x_{N(a, d)+1}^a, \ldots, x_n^a)}\right)_d= \bigoplus_{i=0}^d \left(\frac{\F[x_1, \ldots, x_{n-1}]}{K_n+(x_{N(a, d)+1}^a, \ldots, x_{n-1}^a)}\right)_i \otimes_{\F} \left(\frac{\F[x_n]}{(x_n^a)}\right)_{d-i}.
$$
Since the dimension of $\displaystyle{\frac{\F[x_n]}{(x_n^a)}}$ as an $\F$-vector space is $1$ in each degree less than $a$ and $0$ otherwise, we obtain the recurrence relation
$$
E(n, d)=E(n-1, d)+E(n-1, d-1)+\cdots + E(n-1, d-a+1).
$$
For the right-hand side of (\ref{equ}), an appeal to Remark (\ref{HF of ACI}) gives the following:
$$
F(n, d)=\mathrm{max}\{P(n+1, d)-P(n+1, d-1), 0\},
$$
where 
$$P(n+1, d)=\mathrm{dim}\left(\frac{\F[x_1, \ldots, x_{n+1}]_d}{(x_1^a, \ldots, x_{n+1}^a)_d} \right).
$$
The socle degree of the complete intersection $\displaystyle\frac{\F[x_1, \ldots, x_{n+1}]}{(x_1^a,\dots,x_{n+1}^a)}$ is $(a-1)(n+1)$, so by Theorem (\ref{CI H-func & soc deg}) we have
\begin{equation}\label{inequality}
P(n+1, d)\ge P(n+1, d-1)\quad\Longleftrightarrow\quad d  \le \frac{(a-1)(n+1)}{2}\quad\Longleftrightarrow\quad n\ge \frac{2d}{a-1}-1.
\end{equation}
We have the recurrence relation 
$$
P(n+1, d)=P(n, d)+P(n, d-1)+\cdots + P(n, d-a+1).
$$
When (\ref{inequality}) holds, it follows that 
$$
F(n, d)=P(n+1, d)-P(n+1, d-1)=\sum_{d'=d-a+1}^d \big(P(n, d')-P(n, d'-1)\big).
$$
On the other hand, when $\displaystyle n\ge \frac{2d}{a-1}$, we have 
$P(n, d')-P(n, d'-1)=F(n-1, d')
$ for all $d'\le d$ (from (\ref{inequality}) applied to $n-1$ in the role of $n$), and thus
$$
F(n, d)=F(n-1, d) +F(n-1, d-1)+\cdots + F(n-1, d-a+1).
$$

Since $E(n, d)$ and $F(n, d)$ satisfy the same recurrence relation and are equal for $n=N(a, d)$, it follows that they continue to be equal for all $n\ge N(a, d)$.
\end{proof}

\begin{theorem}\label{injectivity bound}
    Let $n, a, d$ be positive integers with $d\ge a\ge 2$. If 
    $\displaystyle n \ge N(a, d)+\frac{2d-1}{a-1}$, then the map given by multiplication by a general linear form from $(P_n/I_{n, a})_{d-1}$ to $(P_n/I_{n,a})_d$ is injective.
\end{theorem}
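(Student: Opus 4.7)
The plan is to follow the outline in the introduction: apply Proposition \ref{wiebe} to reduce to injectivity on the initial ideal, then invoke Lemma \ref{ayden}. Writing $\ell = x_1 + \cdots + x_n$, it suffices (by the standard Gr\"{o}bner degeneration of $I_{n,a}$ to $\mathrm{in}(I_{n,a})$, under which the kernel dimension of $\times \ell$ is upper semicontinuous, followed by semicontinuity in the choice of linear form) to show that
$\times \ell \colon (P_n/\mathrm{in}(I_{n,a}))_{d-1} \to (P_n/\mathrm{in}(I_{n,a}))_d$
is injective, where $\mathrm{in}$ denotes the initial ideal in reverse lexicographic order.

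Set $m = N(a,d)$ and let $J = K_n + (x_{m+1}^a, \ldots, x_n^a)$, with $K_n \subseteq P_n$ defined as in the proof of Theorem \ref{general bound} (the extension of $K = \mathrm{in}(I_{m,a})$). That proof already shows $\dim(P_n/J)_d = \dim(P_n/\mathrm{in}(I_{n,a}))_d$; combined with the obvious containment $J \subseteq \mathrm{in}(I_{n,a})$, this yields $J_d = \mathrm{in}(I_{n,a})_d$, so any $\ell Q \in \mathrm{in}(I_{n,a})_d$ already lies in $J$. I would then apply Lemma \ref{ayden} to the monomial ideal $J$: for $Q = \sum_\mu a_\mu \mu$ of degree $d-1$ with $\ell Q \in J$, the coefficient $a_\mu$ vanishes whenever $\mu$ divides some monomial $M \notin J$ of degree $2d-1$. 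Since $J \subseteq \mathrm{in}(I_{n,a})$, showing $Q \in J$ forces $Q \in \mathrm{in}(I_{n,a})$, so it suffices to produce, for each monomial $\mu \notin J$ of degree $d-1$, a monomial $\nu$ of degree $d$ with $\mu\nu \notin J$.

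Given such $\mu$, I would decompose $\mu = \mu_1\mu_2$ with $\mu_1 \in \F[x_1,\ldots,x_m]$ and $\mu_2 \in \F[x_{m+1},\ldots,x_n]$, and seek $\nu \in \F[x_{m+1},\ldots,x_n]$ of degree $d$ satisfying $v_i(\mu\nu) \le a-1$ for every $i > m$. The total available capacity in the variables $x_{m+1},\ldots,x_n$ is $(n-m)(a-1) - \deg \mu_2 \ge (n-m)(a-1) - (d-1)$, and the hypothesis $n \ge N(a,d) + (2d-1)/(a-1)$ delivers $(n-m)(a-1) \ge 2d-1$, so the capacity is at least $d$ and a suitable $\nu$ may be built greedily. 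Verification that $\mu\nu \notin J$ then comes in two parts: no $x_i^a$ divides $\mu\nu$, because $v_i(\mu) < a$ for $i \le m$ (as $\mu \notin J$) and $v_i(\mu\nu) < a$ for $i > m$ by construction; and every other minimal generator $g$ of $J$ comes from $K_n$ and so involves only $x_1,\ldots,x_m$, whence $g \mid \mu\nu$ would force $g \mid (\mu\nu)|_{x_1,\ldots,x_m} = \mu_1 \mid \mu$, contradicting $\mu \notin J$.

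The main obstacle is the combinatorial balancing needed to produce $\nu$: it must be confined to $x_{m+1},\ldots,x_n$ (so that the restriction of $\mu\nu$ to the first $m$ variables is exactly $\mu_1$, shielding us from every non-power generator of $J$) while simultaneously keeping each variable's exponent below $a$ in $\mu\nu$. The hypothesis $n \ge N(a,d) + (2d-1)/(a-1)$ is precisely the threshold that makes this balance achievable in the worst case $\deg \mu_2 = d-1$. A secondary subtlety is the opening reduction via Proposition \ref{wiebe}: strictly speaking, we need the degree-specific refinement that injectivity at a single spot transfers from $\mathrm{in}(I_{n,a})$ to $I_{n,a}$ along the Gr\"{o}bner family, which is the standard semicontinuity argument for kernel dimension in a flat family together with upper semicontinuity in the linear form itself.
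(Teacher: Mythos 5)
Your proposal is correct and follows essentially the same route as the paper's proof: reduce to the initial ideal via Proposition (\ref{wiebe}), use Theorem (\ref{general bound}) to see that the degree-$\le d$ generators other than the pure powers involve only $x_1,\dots,x_{N(a,d)}$, and exploit the slack $(n-N(a,d))(a-1)\ge 2d-1$ in the remaining variables to build a degree-$(2d-1)$ multiple of $\mu$ outside the monomial ideal, so that Lemma (\ref{ayden}) applies. The paper's monomial $M$ is obtained by raising every $x_j$ with $j>N(a,d)$ to exponent exactly $a-1$, which is the same padding you carry out greedily with your $\nu$.
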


\begin{proof}
By Proposition (\ref{wiebe}), it suffices to prove injectivity when $I_{n,a }$ is replaced by $\mathrm{in}(I_{n, a})$. Moreover, the injectivity from degree $d-1$ to degree $d$ is not affected by the generators of the ideal of higher degree. Therefore, we may replace $I_{n, a}$ with the ideal generated by the degree $d$ piece of $\mathrm{in}(I_{n, a})$.

Assume $Q=\sum a_{\mu } \mu $ is in the kernel of the map given by multiplication by $L=\sum_{i=1}^n x_i$ from $(P_n/\mathrm{in}(I_{n, a}))_{d-1}$ to $(P_n/\mathrm{in}(I_{n, a}))_d$, where the first summation is over all monomials $\mu $ of degree $d-1$. We need to prove that $a_{\mu }=0$ for all $\mu \notin \mathrm{in}(I_{n, a})$.

Let $\mu\notin \mathrm{in}(I_{n, a})$ be a monomial of degree $d-1$. Let $\mu' :=\mathrm{gcd}(\mu, x_{N(a, d)+1}^{a-1}\cdots  x_n^{a-1})$, and define 
$$
M:= \mu \cdot \frac{x_{N(a, d)+1}^{a-1}\cdots x_{n}^{a-1}}{\mu '}.
$$
Multiplying by any variable $x_j$ with $j>N_{a,d}$ does not affect membership in the ideal generated by the degree $d$ piece of $\mathrm{in}(I_{n, a})$; therefore, $M$ is not in this ideal.
We have 
$$
\mathrm{deg}(M)\ge (a-1)(n-N(a, d))\ge 2d-1,
$$
and now the result follows from Lemma (\ref{ayden}).
\end{proof}

As a corollary, we see that Fr\"oberg's conjecture holds for $n+2$ forms of degree $a$ up to $D:=\lfloor\frac{n(a-1)+1}{4}\rfloor$:
\begin{corollary}
Let $R=P_{n}/(x_{1}^{a},\dots,x_{n}^{a},\ell_{1}^{a},\ell_{2}^{a})$ where $\ell_{1}$ and $\ell_{2}$ are general linear forms. We have
\begin{equation*}
    \big[\mathrm{HS}(R,t)\big]_{D}=\left[\frac{(1-t^{a})^{n+2}}{(1-t)^{n}}\right]_{D}.
\end{equation*}
where $D:=\lfloor\frac{n(a-1)+1}{4}\rfloor$
\end{corollary}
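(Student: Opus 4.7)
The plan is to apply Observation~(\ref{Froberg}) to the ring $R' := P_{n+1}/I_{n+1, a}$ with a general linear form $L \in P_{n+1}$, deducing the desired statement about $R \cong R'/(L)$. The isomorphism $R'/(L) \cong R$ is immediate: a general $L$ can be taken to be a variable after a change of coordinates, so $R'/(L)$ becomes $P_n$ modulo the $a$th powers of $n+2$ general linear forms, and one further change of coordinates puts these in the form $x_1^a, \ldots, x_n^a, \ell_1^a, \ell_2^a$.

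Next I would verify that Fr\"oberg's conjecture is true for $R'$ in every degree. By Remark~(\ref{HF of ACI}), $\dim R'_d = \max\{\dim S_d - \dim S_{d-1},\, 0\}$, where $S := P_{n+2}/(x_1^a, \ldots, x_{n+2}^a)$; on the other hand, the coefficient of $t^d$ in the Fr\"oberg-predicted series
$$\frac{(1-t^a)^{n+2}}{(1-t)^{n+1}} = (1-t)\,\mathrm{HS}(S, t)$$
is precisely $\dim S_d - \dim S_{d-1}$. Since $S$ has a symmetric unimodal Hilbert function by Theorem~(\ref{CI H-func & soc deg}), this first difference is positive up to the middle and non-positive thereafter, so truncation before the first non-positive term agrees with taking the positive part. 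Hence Fr\"oberg holds for $R'$ in all degrees, and $R'_D \ne 0$ since $D$ is well below the socle degree of $R'$ (a short estimate gives $D < (n+2)(a-1)/2$ for $a \ge 2$).

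The core computation is verifying the injectivity hypothesis of Observation~(\ref{Froberg}). Theorem~(\ref{injectivity bound}) applied in $n+1$ variables yields injectivity of multiplication by $L$ from $R'_{d-1}$ to $R'_d$ whenever $n+1 \ge N(a, d) + (2d-1)/(a-1)$. Using $N(a, d) = \lceil 2d/(a-1) \rceil \le 2d/(a-1)+1$, this is implied by $n+1 \ge (4d-1)/(a-1) + 1$, equivalently $d \le (n(a-1)+1)/4$, and hence by $d \le D$. Injectivity is a form of maximal rank, so Observation~(\ref{Froberg}) yields the desired equality of truncated Hilbert series. The only real obstacle is this bookkeeping with the ceiling and floor functions; all the substantive content is carried by Theorem~(\ref{injectivity bound}) and Observation~(\ref{Froberg}).
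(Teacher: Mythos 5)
Your proposal is correct and is exactly the argument the paper intends: the paper's proof is the one-line remark that the corollary ``follows immediately from the preceding theorem and Observation (\ref{Froberg})'', and you have simply filled in the same bookkeeping (Fr\"oberg for $R'=P_{n+1}/I_{n+1,a}$ via Remark (\ref{HF of ACI}) and unimodality, then Theorem (\ref{injectivity bound}) with $n+1$ in the role of $n$ to get injectivity for all $d\le D$). The only unstated triviality is the range $d<a$, where injectivity holds because the ideal is zero in degrees $\le d$, so nothing is lost.
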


\begin{proof}
This follows immediately from the preceding theorem and Observation (\ref{Froberg}).
\end{proof}

\section{Injectivity for ideals generated by squares}

\begin{notation}
    Let $n\geq 1$ be an integer. We fix the following:
    \begin{enumerate}[itemsep=5pt]
        \item $P_n$ denotes the polynomial ring $\F[x_1,\dots,x_n]$ where $\F$ is a field of characteristic $0$. In addition, the notation $P_{n,d}$ will refer to the $d^{\text{th}}$ graded piece of $P_n$.
        \item The quotient ring $R_n:=\F[x_1,\dots,x_{n+1}]/(x_1^2,\dots,x_{n+1}^2)$ and the linear form $\ell:=\sum_{i=1}^{n+1}x_i$. Note that $R_n$ has WLP according to \cite{Stanley-80}.
        \item The ideal $I_n:=\big(x_1^2,\dots,x_{n}^2, (x_1+\cdots+x_n)^2\big)$ in the ring $P_n$. In addition, $I_{n, d}$ will refer to the $d^{\mathrm{th}}$ graded piece of $I_n$, and $\mathrm{in}(I_n)$ is the initial ideal of $I_n$ with respect to the reverse lexicographic order.
    \end{enumerate}
\end{notation}

The main result of this section is Theorem (\ref{injectivity for squares}), which shows that if $n\ge 3d-2$, the map given by multiplication by a general linear form from $(P_n/I_n)_{d-1}$ to $(P_n/I_n)_d$ is injective. 

The key ingredient is the following description of the initial ideal of $I_n$.

\begin{theorem}\label{initial2}
The initial ideal $\mathrm{in}(I_n)$ with respect to the reverse lexicographic order is generated by $x_1^2, \ldots, x_n^2$, and all square-free monomials of degree $d$ in the variables $x_1, \ldots, x_{2d-2}$ for $2d-2\le n$.
\end{theorem}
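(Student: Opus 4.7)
The plan is to exhibit a candidate monomial ideal $J \subseteq P_n$ generated by $x_1^2, \ldots, x_n^2$ together with all squarefree degree-$d$ monomials in $x_1, \ldots, x_{2d-2}$ (for every $d \ge 2$ with $2d - 2 \le n$), and then verify $J = \mathrm{in}(I_n)$ via the standard two-step procedure: establish the containment $J \subseteq \mathrm{in}(I_n)$, and then show that $P_n/J$ has the same Hilbert function as $P_n/I_n = P_n/\mathrm{in}(I_n)$.

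For the containment, the squares $x_i^2$ lie in $I_n$ hence in $\mathrm{in}(I_n)$. For a squarefree monomial $\mu$ of degree $d$ supported in $x_1, \ldots, x_{2d-2}$, I would first handle the edge case $n = 2d - 2$: Remark (\ref{HF of ACI}) together with the identity $\binom{2d-1}{d} = \binom{2d-1}{d-1}$ gives $(P_{2d-2}/I_{2d-2})_d = 0$, so every degree-$d$ monomial, and in particular $\mu$, lies in $I_{2d-2} \subseteq \mathrm{in}(I_{2d-2})$. To pass from $n-1$ to $n \ge 2d - 1$ I would induct on $n$: since $\ell_n^2 \equiv \ell_{n-1}^2 \pmod{x_n}$, we have the ideal identity $I_n + (x_n) = I_{n-1}P_n + (x_n)$, and applying Fact (\ref{revlex props}) to both sides (using that for the reverse-lex order the initial ideal of an extension by an unused variable is the extension of the initial ideal) yields $\mathrm{in}(I_n) + (x_n) = \mathrm{in}(I_{n-1})P_n + (x_n)$. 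Since $\mu$ does not involve $x_{2d-1}, \ldots, x_n$, the membership $\mu \in \mathrm{in}(I_{n-1})P_n$ supplied by induction places $\mu$ inside the monomial ideal on the right and forces $\mu \in \mathrm{in}(I_n)$.

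For the Hilbert function match, a degree-$d$ monomial $\mu = x_{i_1} \cdots x_{i_d}$ with $i_1 < \cdots < i_d$ fails to lie in $J$ precisely when it is squarefree and $i_e \ge 2e - 1$ for every $e \le d$, for otherwise the prefix $x_{i_1} \cdots x_{i_e}$ would already be a squarefree degree-$e$ monomial in $x_1, \ldots, x_{2e-2}$ dividing $\mu$. A standard reflection/ballot-style count gives $\binom{n}{d} - \binom{n}{d-2}$ such sequences, which by two applications of Pascal equals $\binom{n+1}{d} - \binom{n+1}{d-1}$. This agrees with $\dim_{\F}(P_n/I_n)_d$ from Remark (\ref{HF of ACI}) whenever the latter is positive, and is $0$ otherwise, exactly when no valid sequence exists.

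The main obstacle is the inductive containment for the squarefree generators of degree $d$: the crux is noticing that the dimension formula in Remark (\ref{HF of ACI}) collapses to $0$ precisely at $n = 2d-2$, which delivers the base case for free, after which Fact (\ref{revlex props}) transports membership from $I_{n-1}$ to $I_n$ with essentially no work. The combinatorial count then caps things off cleanly once the staircase condition $i_e \ge 2e - 1$ has been isolated.
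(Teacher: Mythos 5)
Your argument is correct, and its overall architecture --- exhibit the candidate monomial ideal $J$, prove $J \subseteq \mathrm{in}(I_n)$, then match Hilbert functions using $\mathrm{HF}(P_n/I_n)=\mathrm{HF}(P_n/\mathrm{in}(I_n))$ --- is the same as the paper's. The containment step is essentially the paper's in substance: the paper also anchors at $n=2d-2$, where $(P_n/I_n)_d=0$ forces every degree-$d$ monomial into $\mathrm{in}(I_{2d-2})$, and then transports membership to larger $n$ via Fact (\ref{revlex props}); you pass from $n-1$ to $n$ one variable at a time where the paper mods out $x_{2d-1},\ldots,x_n$ all at once, a cosmetic difference. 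Where you genuinely depart is the Hilbert function count. The paper never writes a closed-form description of the standard monomials; it runs a double induction (on $d$, then on $n$) showing that both $\dim(P_{n,d}/K_{n,d})$ and $\dim(P_{n,d}/I_{n,d})$ satisfy the Pascal-type recurrence $E(n,d)=E(n-1,d)+E(n-1,d-1)$ with matching boundary values at $n=2d-2$. You instead isolate the staircase condition $i_e\ge 2e-1$ characterizing the monomials outside $J$ and count them in closed form as $\binom{n}{d}-\binom{n}{d-2}$ by a reflection/ballot argument. Your route buys an explicit description of the standard monomial basis (which the paper obtains only implicitly, and which is exactly the lattice-path combinatorics appearing later via \cite{CriQ-Lun-Nen}) at the cost of importing the ballot count; the paper's recurrence is more elementary but less transparent. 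One small point to tighten: your one-line justification of the staircase characterization tacitly needs $2e-2\le n$ for the prefix $x_{i_1}\cdots x_{i_e}$ to be a generator of $J$; this is automatic when $n\ge 2d-2$, and in the remaining range $n\le 2d-3$ one checks separately that every squarefree degree-$d$ monomial is already divisible by some valid lower-degree generator, so that both counts are zero --- consistent with your closing remark, but worth saying explicitly.
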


Before giving the proof, we set up some notation. Let $D_k$ be the set of square-free monomials of degree $2$ in the variables $x_1, \ldots, x_{2k-2}$, for $2k-2\le n$, which are not divisible by any monomials of degree $k'$ in the variables $x_1, \ldots, x_{2k'-2}$ for any $k'<k$. For instance, the first few $D_k$ are given explicitly by
\begin{equation*}
    D_{2}=\{x_1x_2\},\qquad D_{3}=\{x_1x_3x_4,\,x_2x_3x_4\},\quad\text{and}
\end{equation*}
\begin{equation*}
    D_{4}=\{x_1x_3x_5x_6,\,x_2x_3x_5x_6,\,x_1x_4x_5x_6,\,x_2x_4x_5x_6,\,x_3x_4x_5x_6\}.
\end{equation*}
After proving the theorem, we will obtain Corollary (\ref{min gen set for squares}), which expresses the initial ideal in terms of the $D_k$. It will say, for example, that $\mathrm{in}(I_6)$ is minimally generated by $\{x_1^2,\dots,x_6^2\}\cup D_2\cup D_3\cup D_4$.

\begin{proof}
For all degrees $d$, let $K_{n, d}$ denote the vector space spanned by all the degree $d$ monomials that are multiples of a monomial in $D_k$ for some $k\le d$.

We claim that 
\begin{equation}\label{want}
\mathrm{dim}\left(\frac{P_{n, d}}{K_{n, d}}\right) = \mathrm{dim}\left(\frac{P_{n, d}}{I_{n, d}}\right)\end{equation}
 
Remark (\ref{HF of ACI}) gives 
\begin{equation}\label{dim}
\mathrm{dim}\left(\frac{P_{n,d}}{I_{n,d}}\right) = \mathrm{dim}\left( \left(\frac{R_n}{(\ell)}\right)_d\right)
=\mathrm{max}\{\mathrm{dim}((R_n)_d) - \mathrm{dim}((R_n)_{d-1}), 0\}
\end{equation}
\begin{equation*}
=\mathrm{max}\left\{\left(\begin{array}{c} n+1\\ d \\ \end{array}\right) - \left(\begin{array}{c} n+1 \\ d-1 \\ \end{array}\right), 0\right\} = \mathrm{max}\left\{\left( \begin{array}{c} n \\ d \\ \end{array}\right) -\left(\begin{array}{c} n\\ d-2 \\ \end{array}\right), 0\right\}.
\end{equation*}
Note that this is equal to zero for $n\le 2d-2$. On the other hand, $K_{n, d}$ consists of all monomials of degree $d$ in $P_n$ for $n\le 2d-2$, and therefore the left-hand side of equation (\ref{want}) is also equal to zero for $n\le 2d-2$. Moreover, all monomials of degree $d$ in $P_n$ must belong to $\mathrm{in}(I_n)$ for $n\le 2d-2$, so we have, in fact, $(\mathrm{in}(I_n))_d=K_{n, d}$.

It remains to prove that equation (\ref{want}) holds for all $n\ge 2d-2$, and we proceed by induction on $d$.

For $d=2$:
$\mathrm{dim}(P_{n,2}/K_{n, 2})$ is the number of square-free monomials, excluding $x_1x_2$ (since $K_{n, 2}$ is the span of $x_1^2, \ldots, x_n^2, x_1x_2$), i.e. $\displaystyle \left(\begin{array}{c} n\\ 2 \\ \end{array}\right) -1 $; equation (\ref{dim}) gives the same result for $\mathrm{dim}(P_{n,2}/I_{n, 2})$.

Assume $d\ge 3$. Let $E(n, d):= \mathrm{dim}(P_{n,d}/K_{n, d})$ and $F(n, d):= \mathrm{dim}(P_{n,d}/I_{n,d})$. We need to prove that $E(n, d)=F(n, d)$ for all $n\ge 2d-2$. By the inductive hypothesis, we have $E(n, d-1)=F(n, d-1)$ for all $n \ge 2d-4$. Fixing $d$, we now induct $n$.

For $n=2d-2$, we have $E(2d-2, d)=F(2d-2, d)=0$. Let $n\ge 2d-1$.
We claim that
\begin{equation}\label{quad-rec}
    \begin{array}{ccl}
    E(n, d)& =& E(n-1, d-1)+E(n-1, d) \ \ \ \mathrm{and} \\  F(n, d)& =& F(n-1, d-1)+F(n-1, d).
    \end{array}
\end{equation}
For the first equality in claim (\ref{quad-rec}), note that $E(n, d)$ is the number of monomials of degree $d$ in $n$ variables that are not divisible by any monomial of degree $k$ in the variables $x_1, \ldots, x_{2k-2}$, for any $k\le d$. We partition this set into monomials that are divisible by $x_n$ and monomials that are not divisible by $x_n$. Those divisible by $x_n$ can be written as $x_nm$, with $m$ a monomial of degree $d-1$ in $n-1$ variables, not divisible by any monomial of degree $k$ in the variables $x_1, \ldots, x_{2k-2}$ for any $k\le d$. The number of such monomials is $E(n-1, d-1)$. The number of monomials that are not divisible by $x_n$ is $E(n-1, d)$, since we can view these as monomials in $n-1$ variables. 

The second equality in claim (\ref{quad-rec}) follows from (\ref{dim}) and the recurrence relation for binomial coefficients. 

The inductive step now follows from (\ref{quad-rec}), which concludes the proof of equation (\ref{want}).

For $n>2d-2$, let $\overline{I_n}$ and $\overline{\mathrm{in}(I_n)}$ denote the images of $I_n$ and $\mathrm{in}(I_n)$ in $P_n/(x_{2d-1}, \ldots, x_n)=P_{2d-2}$. Note that $\overline{I_n}=I_{2d-2}$. By Fact (\ref{revlex props}), we have $\mathrm{in}(\overline{I_n})=\overline{\mathrm{in}(I_n)}$. 
Recalling that we have already established $K_{2d-2, d}= (\mathrm{in}(I_{2d-2}))_d$, it now follows that $K_{n, d}\subseteq (\mathrm{in}(I_n))_d$. Coupled with (\ref{want}), we now conclude that $K_{n, d}=(\mathrm{in}(I_n))_d$.
\end{proof}

As an immediate consequence, we have the following description of a minimal set of generators for $\mathrm{in}(I_n)$:
\begin{corollary}\label{min gen set for squares}
A minimal set of generators for $\mathrm{in}(I_n)$ is $\displaystyle \{x_1^2, \ldots, x_n^2\} \cup \bigcup_{2k-2\le n} D_k$.
\end{corollary}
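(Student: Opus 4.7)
The plan is to leverage Theorem \ref{initial2}, which already gives a generating set (namely $x_1^2,\ldots,x_n^2$ together with every square-free monomial of degree $d$ in $x_1,\ldots,x_{2d-2}$ for each $d$ with $2d-2 \le n$). Since a monomial ideal admits a unique minimal generating set, consisting of those monomials with no proper divisor in the ideal, the task reduces to showing (i) that $\{x_1^2,\ldots,x_n^2\} \cup \bigcup_{2k-2\le n} D_k$ still generates $\mathrm{in}(I_n)$, and (ii) that no element of this set is divisible by a distinct element of the set.

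For (i), the point is that every square-free degree-$d$ monomial $M$ in $x_1,\ldots,x_{2d-2}$ can be replaced by a divisor lying in some $D_k$. Specifically, I will take $k^{*}$ to be the smallest positive integer such that $M$ admits a square-free divisor $M'$ of degree $k^{*}$ supported in $x_1,\ldots,x_{2k^{*}-2}$; this minimum exists because $(d,M)$ itself is a valid witness, so $k^{*}\le d$. By the minimality of $k^{*}$, this $M'$ has no proper square-free divisor of smaller degree $k'$ supported in $x_1,\ldots,x_{2k'-2}$, which is exactly the defining condition for $M' \in D_{k^{*}}$. Since $k^{*}\le d$ we have $2k^{*}-2\le n$, so $M'$ belongs to the proposed generating set and divides $M$.

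For (ii), a quick case check suffices. Each $x_i^2$ has only $1$ and $x_i$ as proper divisors, neither in $\mathrm{in}(I_n)$; no $D_k$ element divides $x_i^2$ (a square-free monomial of degree $k\ge 2$ uses at least two variables), and $x_i^2$ does not divide any $D_k$ element (which is square-free). Two distinct elements of the same $D_k$ have the same degree so neither divides the other. If $m''\in D_{k''}$ with $k''<k$ were to divide $m\in D_k$, then $m$ would have a divisor of degree $k''$ supported in $x_1,\ldots,x_{2k''-2}$, contradicting the defining condition for $D_k$. I do not anticipate a genuine obstacle here: the corollary is essentially a bookkeeping consequence of Theorem \ref{initial2} combined with the way $D_k$ was defined to exclude exactly those monomials that would be redundant; the only subtlety is keeping straight the distinction between the broader class appearing in Theorem \ref{initial2} and the trimmed-down class $D_k$.
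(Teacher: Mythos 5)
Your proposal is correct and matches the paper's approach: the paper disposes of this corollary with the single line ``This follows directly from Theorem (\ref{initial2})'', and your argument simply supplies the routine verification (every square-free degree-$d$ generator from the theorem has a divisor in some $D_{k}$ with $k\le d$, and the defining exclusion in $D_k$ together with degree considerations rules out any divisibility among the listed generators). The only cosmetic remark is that the case $k''>k$ in your step (ii) is excluded automatically by comparing degrees, which you leave implicit but which is harmless.
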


\begin{proof}
This follows directly from Theorem (\ref{initial2}).    
\end{proof}

Before proving the main result of this section, we will need two lemmas. They allow us to construct a multiple of a given monomial that avoids membership in the initial ideal, positioning us to use Lemma (\ref{ayden}). To illustrate their utility and the main idea of their proofs, we provide a concrete example.

\begin{example}\label{ex1}
\normalfont{Fix $d=7$ and $n=2d-2=12$. Given a monomial $\mu\in P_{12}$ of degree $d-2=5$ which is not in $\mathrm{in}(I_{12})$, our goal is to build a monomial $\widetilde{\mu}$ of degree $d-1=6$ which is a multiple of $\mu$ and is not in $\mathrm{in}(I_{12})$.

We choose $\mu=x_1x_2x_3x_4x_{12}$ first. Note that $\deg{(\mu)}=5=d-2$, and moreover $\mu\notin\mathrm{in}(I_{12})$ by Theorem (\ref{initial2}). In this case, define $\widetilde{\mu}=x_{11}\mu$. This satisfies $\deg{(\widetilde{\mu})}=\deg{(\mu)}+1$, and Theorem (\ref{initial2}) ensures $\widetilde{\mu}\notin\mathrm{in}(I_{12})$. More generally, this construction will work, provided that $\mu$ is not divisible by $x_{11}x_{12}$.

Now, let us take $\mu=x_1x_2x_3x_{11}x_{12}$. In this case, define $\mu'=x_1x_2x_3$ so that $\mu=x_{11}x_{12}\mu'$. Observe that because it is square-free, $\mu'$ may be viewed as an element in $P_{10}$. It follows from Theorem (\ref{initial2}) that $\big(\mathrm{in}(I_{12})\big)_{t}=\big(\mathrm{in}(I_{10})\big)_{t}$ whenever $t\leq d-1=6$. Since $\mu\notin\mathrm{in}(I_{12})$ and $\deg{(\mu')}=3<6$, we deduce that $\mu'\notin\mathrm{in}(I_{10})$. With this, we define $\widetilde{\mu'}=x_{10}\mu'$ to reach $\deg{(\widetilde{\mu'})}=\deg{(\mu')}+1$ and $\widetilde{\mu'}\notin\mathrm{in}(I_{10})$. Finally, define $\widetilde{\mu}=x_{11}x_{12}\widetilde{\mu'}$ to obtain a monomial with the desired properties. More generally, this construction works when $x_{11}x_{12}$ divides $\mu$.}
\end{example}

\begin{lemma}\label{degree}
Let $d\ge 2$ and $n=2d-2$. If $\mu\in P_n $ is a monomial with $\mathrm{deg}(\mu):=\delta \le d-2$ and $\mu \notin \mathrm{in}(I_n)$, then there exists $\widetilde{\mu}\in P_n$ a multiple of $\mu$ with $\mathrm{deg}(\widetilde{\mu})=\mathrm{deg}(\mu ) +1$ and $\widetilde{\mu}\notin \mathrm{in}(I_n)$.
\end{lemma}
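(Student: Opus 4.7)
The plan is to construct $\widetilde{\mu}$ by multiplying $\mu$ by the largest available variable, with a recursive fallback when both of the two largest variables $x_{n-1}$ and $x_n$ already divide $\mu$. Proceed by induction on $d$. The key structural input, coming from Theorem~\ref{initial2}, is the following reformulation: for a square-free monomial $m$ of degree $\le d$, one has $m \notin \mathrm{in}(I_n)$ if and only if $c_k(m) := |\mathrm{supp}(m) \cap \{x_1, \ldots, x_{2k-2}\}| \le k - 1$ for every $k \le \mathrm{deg}(m)$. The decisive observation is that for any $k \le \delta + 1 \le d - 1$ we have $2k - 2 \le 2d - 4 < n - 1$, so neither $x_{n-1}$ nor $x_n$ lies in $\{x_1, \ldots, x_{2k-2}\}$; consequently multiplication by either of these variables leaves every $c_k$ in the relevant range unchanged.

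The base case $d = 2$ forces $\delta = 0$, $\mu = 1$, and $\widetilde{\mu} = x_2$ suffices. For $d \ge 3$, split into three sub-cases. If $x_n \notin \mathrm{supp}(\mu)$, set $\widetilde{\mu} := x_n \mu$; it is square-free, $c_k(\widetilde{\mu}) = c_k(\mu) \le k - 1$ for $k \le \delta$ by hypothesis on $\mu$, and $c_{\delta + 1}(\widetilde{\mu}) = c_{\delta + 1}(\mu) \le |\mathrm{supp}(\mu)| = \delta$. The sub-case $x_{n-1} \notin \mathrm{supp}(\mu)$ is handled identically. In the last sub-case, both $x_{n-1}$ and $x_n$ divide $\mu$, so $\delta \ge 2$; write $\mu = x_{n-1} x_n \mu'$ with $\mu' \in \F[x_1, \ldots, x_{n-2}]$ square-free of degree $\delta - 2$. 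The observation yields $c_k(\mu') = c_k(\mu) \le k - 1$ for $k \le \delta - 2$, hence $\mu' \notin \mathrm{in}(I_{n-2})$; since $n - 2 = 2(d - 1) - 2$ and $\delta - 2 \le (d - 1) - 2$, the inductive hypothesis supplies $\widetilde{\mu'} \in P_{n-2}$ of degree $\delta - 1$ that is a multiple of $\mu'$ and not in $\mathrm{in}(I_{n-2})$. Setting $\widetilde{\mu} := x_{n-1} x_n \widetilde{\mu'}$ produces a multiple of $\mu$ of degree $\delta + 1$; one checks $\widetilde{\mu} \notin \mathrm{in}(I_n)$ by combining the observation, the inductive conclusion on $\widetilde{\mu'}$, and the bound $c_k(\widetilde{\mu'}) \le |\mathrm{supp}(\widetilde{\mu'})| = \delta - 1$ for $k \in \{\delta, \delta + 1\}$.

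The main obstacle is not conceptual but combinatorial bookkeeping: the entire argument collapses onto the single observation that $x_{n-1}$ and $x_n$ behave neutrally for the $c_k$-counts in the relevant range. The only care needed is to verify the $c_k$-inequalities at the boundary values $k \in \{\delta - 2, \delta - 1, \delta, \delta + 1\}$ in the recursive sub-case and to confirm that the reduced triple $(\mu', n - 2, d - 1)$ genuinely satisfies the hypotheses of the inductive step.
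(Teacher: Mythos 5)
Your proposal is correct and follows essentially the same route as the paper: induction on $d$, multiplying by $x_{2d-3}$ or $x_{2d-2}$ when one of them is available (these variables lie outside $x_1,\dots,x_{2k-2}$ for all relevant $k\le\delta+1$), and otherwise factoring out $x_{2d-3}x_{2d-2}$ and recursing into $P_{2d-4}=P_{2(d-1)-2}$. Your explicit counting function $c_k$ is just a reformulation of the paper's observation that the degree $\le d-1$ square-free generators of $\mathrm{in}(I_{2d-2})$ and $\mathrm{in}(I_{2d-4})$ coincide, so the two arguments are the same in substance.
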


\begin{proof}
We induct on $d$. The base case is $d=2$, in which case $\mathrm{deg}(\mu)=0$ and the statement is trivial.

Let $d>2$. The assumption that $\mu\notin \mathrm{in}(I_n)$ means that $\mu$ is square-free and is not divisible by any monomial of degree $k$ in the variables $x_1,\ldots,x_{2k-2}$ for any $k\le \delta$.
If there exists $j\in \{2d-3, 2d-2\}$ such that $\mu$ is not divisible by $x_j$, let $\widetilde{\mu}:=\mu x_j$. Observe that $\widetilde{\mu}$ is not divisible by any monomial of degree $k$ in the variables $x_1, \ldots, x_{2k-2}$ for any $k\le \delta +1$. Indeed, since $\delta +1 \le d-1$ and $2(\delta +1) -2 \le 2d-4$, multiplication by $x_{2d-3}, x_{2d-2}$ does not affect this property, and therefore $\widetilde{\mu}\notin \mathrm{in}(I_n)$.

Otherwise, say $\mu =x_{2d-2}x_{2d-3}\mu '$ with $\mathrm{deg}(\mu')=\delta -2$. Since $\mu$ is square-free, $\mu'$ does not involve the variables $x_{2d-2}, x_{2d-3}$, and therefore we can view $\mu'$ as a monomial in $P_{2d-4}$. Theorem (\ref{initial2}) implies that the square-free monomial generators of $\mathrm{in}(I_{2d-2})$ that have degree $\le d-1$ are the same as the square-free monomial generators of $\mathrm{in}(I_{2d-4})$ of degree $\le d-1$, so the assumption $\mu \notin \mathrm{in}(I_{2d-2})$ implies $\mu'\notin \mathrm{in}(I_{2d-4})$. Thus, we may apply the inductive hypothesis to $\mu'$ in the role of $\mu$, whence there is a monomial $\widetilde{\mu '}\in P_{2d-4}$ that is a multiple of $\mu'$ of degree $\mathrm{deg}(\mu')+1=\delta -1$ with $\widetilde{\mu'}\notin \mathrm{in}(I_{2d-4})$. We can now define $\widetilde{\mu}:=x_{2d-2}x_{2d-3}\widetilde{\mu'}\in P_{2d-2}$, which satisfies the required properties. 
\end{proof}

\begin{lemma}\label{degree2}
    Let $d\ge 2$ and $n=2d-2$. If $\mu\in P_n $ is a monomial with $\mathrm{deg}(\mu)\le d-2$ and $\mu \notin \mathrm{in}(I_n)$, then there exists $\widetilde{\mu}\in P_n$ a multiple of $\mu$ with $\mathrm{deg}(\widetilde{\mu})=d-1$ and $\widetilde{\mu}\notin \mathrm{in}(I_n)$.
\end{lemma}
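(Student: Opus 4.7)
The statement of Lemma (\ref{degree2}) differs from Lemma (\ref{degree}) only in that we are asked to produce a multiple of degree exactly $d-1$ rather than merely of degree $\deg(\mu)+1$. The plan is therefore to prove Lemma (\ref{degree2}) as an immediate corollary of Lemma (\ref{degree}) by iterating the latter lemma.

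\medskip

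More precisely, set $\delta:=\deg(\mu)$, so that $\delta\le d-2$. I would build a chain of monomials $\mu=\mu_{\delta},\mu_{\delta+1},\ldots,\mu_{d-1}$ in $P_n$ with the properties that (i) $\deg(\mu_k)=k$, (ii) $\mu_k$ is a multiple of $\mu_{k-1}$ (and hence of $\mu$), and (iii) $\mu_k\notin\mathrm{in}(I_n)$. The construction is inductive: given $\mu_k$ with $k\le d-2$ satisfying these properties, the hypotheses of Lemma (\ref{degree}) are met for $\mu_k$ in the role of $\mu$, so the lemma produces a multiple of $\mu_k$ of degree $k+1$ that is still outside $\mathrm{in}(I_n)$, which I take to be $\mu_{k+1}$. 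After $d-1-\delta$ applications I reach $\mu_{d-1}$, and setting $\widetilde{\mu}:=\mu_{d-1}$ completes the proof.

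\medskip

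The only thing that has to be verified for the induction to be legal is that each intermediate $\mu_k$ with $k<d-1$ actually satisfies the degree bound required by Lemma (\ref{degree}), namely $\deg(\mu_k)\le d-2$. This is automatic: the iteration halts at $k=d-1$, so Lemma (\ref{degree}) is only ever invoked on a monomial of degree at most $d-2$. There is no substantive obstacle here; the hard work has already been done in Lemma (\ref{degree}), and Lemma (\ref{degree2}) is essentially a packaging statement that promotes a ``one-step extension'' result to a ``reach-the-target-degree'' result.
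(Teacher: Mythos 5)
Your proposal is correct and matches the paper's proof exactly: the paper also obtains Lemma (\ref{degree2}) by repeated application of Lemma (\ref{degree}), and your check that each intermediate monomial has degree at most $d-2$ (so the hypothesis of Lemma (\ref{degree}) is always satisfied) is the only point that needs verifying.
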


\begin{proof}
This follows by a repeated application of Lemma (\ref{degree}).
\end{proof}

We are now ready to prove the main result of this section.

\begin{theorem}\label{injectivity for squares}
Let $d$ be a positive integer. If $3d-2\le n$, then multiplication by a general linear form from $(P_n/I_n)_{d-1}$ to $(P_n/I_n)_d$ is injective.
\end{theorem}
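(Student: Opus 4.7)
The plan is to follow the strategy laid out for Theorem (\ref{injectivity bound}): by Proposition (\ref{wiebe}) it suffices to show injectivity modulo $\mathrm{in}(I_n)$, and since injectivity of multiplication by $\ell = x_1 + \cdots + x_n$ from degree $d-1$ to degree $d$ is unaffected by generators of $\mathrm{in}(I_n)$ of degree exceeding $d$, we may replace $\mathrm{in}(I_n)$ with the ideal $J$ generated by $\mathrm{in}(I_n)_d$. Thus, given $Q=\sum_\mu a_\mu \mu$ of degree $d-1$ with $\ell Q \in J$, Lemma (\ref{ayden}) reduces the proof to showing that every monomial $\mu$ of degree $d-1$ with $\mu \notin \mathrm{in}(I_n)$ divides some monomial $M$ of degree $2d-1$ satisfying $M \notin J$.

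The gain in replacing $\mathrm{in}(I_n)$ by $J$ is that only the degree-$d$ defining constraints of $\mathrm{in}(I_n)$ enter: by Theorem (\ref{initial2}), a square-free monomial $M$ avoids $J$ precisely when, for every $k \le d$, the number of variables $x_i$ dividing $M$ with $i \le 2k-2$ is at most $k-1$ (with no further constraint for $k > d$). I would construct such an $M$ by splitting $\mu = \mu_L \mu_H$ into its low-index ($i \le 2d-2$) and high-index ($i \ge 2d-1$) parts, and setting $t := \deg \mu_H$. When $t \ge 1$, the hypothesis $\mu \notin \mathrm{in}(I_n)$ implies $\mu_L \notin \mathrm{in}(I_{2d-2})$ (viewed inside $P_{2d-2}$), so Lemma (\ref{degree2}) produces a multiple $\widetilde{\mu}_L \in P_{2d-2}$ of $\mu_L$ with $\deg \widetilde{\mu}_L = d-1$ and $\widetilde{\mu}_L \notin \mathrm{in}(I_{2d-2})$; when $t = 0$, one simply sets $\widetilde{\mu}_L := \mu$. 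Then let
$$
M := \widetilde{\mu}_L \cdot \mu_H \cdot \prod_{i \in B} x_i,
$$
where $B$ is any $(d-t)$-subset of $\{2d-1, \ldots, n\}$ disjoint from the support of $\mu_H$; the hypothesis $n \ge 3d-2$ guarantees at least $(n-2d+2)-t \ge d-t$ such indices are available.

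A brief verification confirms that $M$ is square-free of degree exactly $2d-1$ and is a multiple of $\mu$, and the crucial containment $M \notin J$ follows because for each $k \le d$ the variables $x_i$ dividing $M$ with $i \le 2k-2$ come entirely from $\widetilde{\mu}_L$ (since $\mu_H$ and $B$ use only indices $\ge 2d-1$), and their count is at most $k-1$ by construction of $\widetilde{\mu}_L$. Lemma (\ref{ayden}) then forces $a_\mu = 0$ for every $\mu \notin \mathrm{in}(I_n)$ of degree $d-1$, so $Q \in \mathrm{in}(I_n)_{d-1}$, yielding injectivity modulo $\mathrm{in}(I_n)$; Proposition (\ref{wiebe}) completes the transfer to $P_n / I_n$.

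The main obstacle is the extension step when $t \ge 1$: one must promote $\mu_L$ (of degree $d-1-t$) to a multiple of degree $d-1$ lying in $P_{2d-2}$ and still avoiding $\mathrm{in}(I_{2d-2})$, without introducing any variables of index exceeding $2d-2$. Lemma (\ref{degree2}) is exactly engineered for this purpose, which explains why those earlier combinatorial lemmas were developed; once the extension is in hand, the remaining verification that $M \notin J$ is straightforward bookkeeping.
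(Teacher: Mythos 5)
Your argument is correct and follows essentially the same route as the paper's proof: reduce via Proposition (\ref{wiebe}) and Theorem (\ref{initial2}) to the ideal generated by the degree-$\le d$ part of the initial ideal, split $\mu$ at index $2d-2$, extend the low-index part to degree $d-1$ inside $P_{2d-2}$ via Lemma (\ref{degree2}), and pad with high-index variables to produce a degree-$(2d-1)$ witness for Lemma (\ref{ayden}). The only (cosmetic) difference is that you allow the high part of $\mu$ to involve any indices in $\{2d-1,\dots,n\}$ and choose the padding set $B$ disjoint from its support, which is slightly more careful bookkeeping of the same construction the paper carries out with the fixed block $x_{2d-1}\cdots x_{3d-2}$.
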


\begin{proof}
By Proposition (\ref{wiebe}), it suffices to prove injectivity when $I_n$ is replaced by $\mathrm{in}(I_n)$. In other words, we need to prove that $\times L: (P_n/\mathrm{in}(I_n))_{d-1} \to (P_n/\mathrm{in}(I_n))_d$ is injective, where $L=\sum_{i=1}^n x_i$. Since this map is not affected by the higher degree generators of $\mathrm{in}(I_n)$, we may replace $\mathrm{in}(I_n)$ by the ideal generated by the elements of $\mathrm{in}(I_n)$ of degree $\le d$. Denote this ideal by $K_{n,d}$. Assume there is $Q=\sum_{\mu} a_{\mu} \mu \in P_{d-1}$ such that the image of $Q$ in $P/K_{n,d}$ is in the kernel of the multiplication by $L$ map. We need to prove that $a_{\mu }=0$ for every monomial $\mu $ of degree $d-1$ such that $\mu \notin K_{n, d}$. 
This will follow from Lemma (\ref{ayden}) provided that we can construct a monomial $M\in P_n$ such that $M$ is a multiple of $\mu$, $\mathrm{deg}(M)\ge 2d-1$, and $M\notin K_{n, d}$.

If all of $x_{2d-1}, \ldots, x_{3d-2}$ do not divide $\mu$, we can take
$M:=\mu x_{2d-1} \cdots x_{3d-2}$. $M$ is a square-free monomial of degree $2d-1$, and is not divisible by any monomials of degree $k$ in the variables $x_1,\ldots, x_{2k-2}$ for any $k\le d$, and therefore $M\notin K_{n, d}$.

Otherwise, say that $\mu = x_{j_1}\cdots x_{j_s} \mu'$ where $j_1, \ldots, j_s\in \{2d-1, \ldots, ,3d-2\}$ and $\mu'$ only contains variables up to $x_{2d-2}$. Applying Lemma (\ref{degree2}) to $\mu'$ in the role of $\mu$ gives us a monomial $\widetilde{\mu}$ of degree $d-1$ in the variables up to $x_{2d-2}$ which is a multiple of $\mu'$ and is not in $K_{n, d}$. We can now take $M:=\widetilde{\mu} x_{2d-1}\cdots x_{3d-2}$.
\end{proof}

It was pointed out to us by the referee that Theorem (\ref{injectivity for squares}) can be derived as a corollary of the results in \cite{CriQ-Lun-Nen}. More precisely, Theorem 8 in \cite{CriQ-Lun-Nen} shows that 
\begin{equation}\label{inqq1}
\dim_{\F}\left(\left(\frac{P_{n}}{(x_1^2, \dots, x_n^2, \ell_1^2,\ell_2^2)}\right)_{d}\right)\leq a_{n,d}
\end{equation}
where $a_{n, d}$ is a quantity defined as the number of lattice paths of a certain type. Moreover, Proposition 7 in \cite{CriQ-Lun-Nen} shows that for $\displaystyle d\le \left\lfloor \frac{n}{3}\right\rfloor +1$, 
$$
a_{n, d}=\left(\begin{array}{c} n\\ d\\ \end{array}\right) -2\left(\begin{array}{c} n\\ d-2\\ \end{array}\right) + \left(\begin{array}{c} n\\ d-4\\ \end{array}\right),
$$
which is the coefficient of $t^d$ in
$\displaystyle \frac{(1-t^2)^{n+2}}{(1-t)^n}$, the dimension predicted by the Fr\"oberg conjecture for the quotient of the polynomial ring in $n$ variables by $n+2$ general forms of degree 2. Since the dimension is always greater than or equal to the dimension predicted by the Fr\"oberg conjecture, it follows that equality must hold. Observe that $\F[x_1, \ldots, x_n]/(x_1^2, \ldots, x_n^2, \ell_1^2, \ell _2^2))$ is obtained as the quotient of $\F[x_1, \ldots, x_{n+1}]/(x_1^2, \ldots, x_{n+1}^2, (x_1+\cdots + x_{n+1})^2)$ by a general linear form. The conclusion of Theorem (\ref{injectivity for squares}) now follows by applying this fact with $n-1$ in the role of $n$ and using Observation (\ref{Froberg}). In Section 5, we will prove that injectivity fails in degree $d$ for $n\leq 3d-3$, and we will discuss how the failure of injectivity relates to Conjecture 2 from \cite{CriQ-Lun-Nen}, which asserts that

\begin{equation}\label{exterior conj}
\mathrm{dim}_{\F}\big((P_n/(x_1^2, \ldots, x_n^2, \ell_1^2, \ell _2^2))_d\big)=a_{n, d}
\end{equation}
holds for all $n, d$.

The following corollary shows that Fr\"oberg's Conjecture holds for $n+2$ quadratic linear forms up to degree $D:= \lfloor \frac{n+2}{3} \rfloor$.

\begin{corollary}
Let $R=P_{n}/(x_{1}^{2},\dots,x_{n}^{2},\ell_{1}^{2},\ell_{2}^{2})$ where $\ell_{1}$ and $\ell_{2}$ are general linear forms. We have
\begin{equation*}
    \big[\mathrm{HS}(R,t)\big]_{D}=\left[\frac{(1-t^{2})^{n+2}}{(1-t)^{n}}\right]_{D}.
\end{equation*}
where $D:=\lfloor\frac{n+2}{3}\rfloor$
\end{corollary}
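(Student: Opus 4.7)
The plan is to derive this as a direct consequence of Theorem \ref{injectivity for squares} and Observation \ref{Froberg}, applied to the auxiliary ring $\widetilde{R} := P_{n+1}/I_{n+1} = P_{n+1}/(x_1^2, \ldots, x_{n+1}^2, (x_1 + \cdots + x_{n+1})^2)$. Observe first that $\widetilde{R}$ is the quotient of $P_{n+1}$ by $n+2$ forms of degree $2$; since the number of generators equals the number of variables plus one, Stanley's theorem \cite{Stanley-78} implies that Fr\"oberg's conjecture holds in every degree for $\widetilde{R}$, and in particular up to degree $D$.

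Next, I would apply Theorem \ref{injectivity for squares} to $\widetilde{R}$, where the role of the theorem's $n$ is now played by $n+1$. The hypothesis $3d - 2 \le n+1$ rearranges to $3d \le n+3$; checking the three residue classes of $n$ modulo $3$ confirms that $3D \le n+3$ for $D = \lfloor (n+2)/3 \rfloor$, so Theorem \ref{injectivity for squares} yields injectivity (hence maximal rank) of multiplication by a general linear form $L$ from $(\widetilde{R})_{d-1}$ to $(\widetilde{R})_d$ for every $d \le D$.

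Finally, I would invoke the forward direction of Observation \ref{Froberg}: since Fr\"oberg's conjecture holds for $\widetilde{R}$ up to degree $D$ and multiplication by $L$ has maximal rank at every such degree, the equality (\ref{froberg}) also holds for $\widetilde{R}/(L)$ up to degree $D$. Writing $L = x_{n+1} - \lambda$ for a general $\lambda \in P_n$ and using the natural isomorphism $P_{n+1}/(L) \cong P_n$, one identifies $\widetilde{R}/(L) \cong P_n/\bigl(x_1^2, \ldots, x_n^2, \lambda^2, (x_1 + \cdots + x_n + \lambda)^2\bigr)$, which realizes $R$ for a specific pair of general linear forms. Upper semicontinuity of the Hilbert function (equivalently, Fr\"oberg's lower bound together with the fact that a single attaining choice forces general ones to attain) then transfers the truncated Fr\"oberg equality to every general pair $\ell_1, \ell_2$. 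The only genuine verification is the elementary inequality $3D \le n+3$, so there is essentially no substantive obstacle beyond assembling the ingredients already in hand.
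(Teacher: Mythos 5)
Your proposal is correct and follows exactly the paper's route: the paper's proof is the one-line observation that the corollary follows from Theorem (\ref{injectivity for squares}) applied with $n+1$ variables together with Observation (\ref{Froberg}), and your write-up simply fills in the same details (Stanley's result for $n+2$ forms in $n+1$ variables, the arithmetic check $3D-2\le n+1$, and the identification of $\widetilde{R}/(L)$ with $R$). No gaps.
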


\begin{proof}
This follows immediately from the preceding theorem and Observation (\ref{Froberg}).
\end{proof}

\section{Injectivity for ideals generated by cubes}

\begin{notation}
Let $n\geq 1$ be an integer. We fix the following:
\begin{enumerate}[itemsep=5pt]
    \item $P_n$ denotes the polynomial ring $\F[x_1,\dots,x_n]$ where $\F$ is a field of characteristic $0$. In addition, the notation $P_{n,d}$ will refer to the $d^{\text{th}}$ graded piece of $P_n$.
    \item The ideal $J_n:=\big(x_1^3,\dots,x_{n}^3, (x_1+\cdots+x_n)^3\big)$ in the ring $P_n$.\newline
    Further, $\mathrm{in}(J_n)$ is the initial ideal of $J_n$ with respect to the reverse lexicographic order.
    \item For $k\ge 3$, $D_k$ is the set of monomials of degree $k$  in the variables $x_1, \ldots, x_{k-1}$ which are not divisible by $x_{k-1}^2$. (If $n<k-1$, then $D_k$ contains all monomials of degree $k$ in $x_1, \ldots, x_n$). For instance, the first few $D_k$ are given by
    \begin{equation*}
        D_3=\{x_1^2x_2\},\quad D_4=\{x_1^2x_2^2,\, x_1^2x_2x_3,\, x_1x_2^2x_3\},\quad\text{and}
    \end{equation*}
    \begin{equation*}
        D_5=\{x_1^2x_2^2x_3,\, x_1^2x_2^2x_4,\, x_1^2x_2x_3^2,\, x_1^2x_2x_3x_4,\, x_1x_2^2x_3^2,\, x_1x_2^2x_3x_4,\, x_1x_2x_3^2x_4,\, x_2^2x_3^2x_4, x_1^2x_3^2x_4\}.
    \end{equation*}
    \item The ideal $K_n\subseteq P_n$ generated by $x_1^3, \ldots, x_n^3$, and $\displaystyle \bigcup_{k=3}^{n+1} D_k$.
    \item We define $
    G(n, d):=\displaystyle\mathrm{dim}\left(\left(\frac{P_n}{(x_1^3, \ldots, x_n^3)}\right)_d\right)$.
\end{enumerate}
\end{notation}

We make two preliminary observations regarding the notation above.

\begin{obs}\label{small variable number}
For $n\leq d-2$, we have $(K_n)_d=(P_n)_d$.
\end{obs}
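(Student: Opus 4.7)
The plan is to show directly that every monomial $\mu$ of degree $d$ in $P_n$ belongs to $K_n$; since monomials form an $\F$-basis of $P_{n,d}$, this suffices. Write $\mu = x_1^{e_1}\cdots x_n^{e_n}$ with $\sum e_i = d \ge n+2$.

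The first step is to dispatch the easy case: if some $e_i \ge 3$, then $\mu$ is a multiple of the generator $x_i^3$ of $K_n$ and we are done. So we may assume $e_i \le 2$ for all $i$. This forces $d \le 2n$, which combined with $d \ge n+2$ gives $n \ge 2$, so in particular the set $D_{n+1}$, consisting of monomials of degree $n+1$ in $x_1,\ldots,x_n$ not divisible by $x_n^2$, is among the generators of $K_n$.

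The main step is to produce a divisor of $\mu$ lying in $D_{n+1}$, by case analysis on $e_n$. If $e_n \le 1$, the partial degree $\sum_{i<n} e_i = d - e_n$ is at least $n+1$, so one can choose $a_i \le e_i$ for $i < n$ summing to $n+1 - e_n$; then $\nu := x_1^{a_1}\cdots x_{n-1}^{a_{n-1}}x_n^{e_n}$ has degree $n+1$, $x_n$-exponent at most $1$, and divides $\mu$. If $e_n = 2$, the partial degree is $d - 2 \ge n$, so one can choose $a_i \le e_i$ for $i < n$ summing to $n$; then $\nu := x_1^{a_1}\cdots x_{n-1}^{a_{n-1}}x_n$ has degree $n+1$, $x_n$-exponent exactly $1$, and divides $\mu$. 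In either case $\nu \in D_{n+1} \subseteq K_n$, so $\mu \in K_n$.

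There is no significant obstacle; the only thing worth flagging is that the hypothesis $n \le d - 2$ is used tightly in the $e_n = 2$ subcase, where we need exactly $d - 2 \ge n$ to guarantee that a degree-$n$ submonomial of $\mu$ in $x_1,\ldots,x_{n-1}$ exists.
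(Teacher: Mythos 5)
Your proof is correct and follows essentially the same route as the paper's: every degree-$d$ monomial is exhibited as a multiple of either some $x_i^3$ or an element of $D_{n+1}$. Your version is in fact slightly more explicit, since you carefully extract a degree-$(n+1)$ divisor with $x_n$-exponent at most one, a step the paper's write-up elides.
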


\begin{proof}
Let $m\in P_{n}$ be a monomial of degree $d$. If $x_{n}^2$ does not divide $m$, then $m$ lies in $D_{n+1}$ by definition and we are done. If instead $x_{n}^2$ divides $m$, then we can write $m=x_{n}^2m'$ where the degree of $m'$ is $d-2$. If $x_{n}$ divides $m'$, then $x_{n}^3$ divides $m$ and so $m$ lies in $K_{n}$. If not, then write $m=x_{n}(x_{n}m')$. The monomial $x_{n}m'$ is not divisible by $x_{n}^2$, so $x_{n}m'$ lies in $D_{n}$. This implies that $m$ belongs to $(K_{n})_d$. 
\end{proof}

\begin{obs}\label{cube-rec}
The function $G(n,d)$ satisfies the following two relations:
\begin{enumerate}[label=(\alph*), itemsep=5pt]\centering
    \item $G(n, d) = G(n-1, d) + G(n-1, d-1)+G(n-1, d-2)$;
    \item $G(d, d) - G(d, d-1) = G(d-1, d-2)-G(d-1, d-3)$.
\end{enumerate}
\end{obs}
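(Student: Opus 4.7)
For part (a), the plan is to exploit the tensor product decomposition
\[
\frac{P_n}{(x_1^3,\ldots,x_n^3)} \;\cong\; \frac{P_{n-1}}{(x_1^3,\ldots,x_{n-1}^3)} \otimes_{\F} \frac{\F[x_n]}{(x_n^3)}.
\]
Taking the degree-$d$ graded piece and using that $\dim_{\F}(\F[x_n]/(x_n^3))_i$ equals $1$ for $i\in\{0,1,2\}$ and $0$ otherwise yields
\[
G(n,d)\;=\;\sum_{i=0}^{2} G(n-1,d-i),
\]
which is exactly the claimed recurrence. This is entirely routine; the same kind of argument was already used in the proof of Theorem \ref{general bound}.

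For part (b), the plan is to combine (a) with the symmetry of the Hilbert function of a complete intersection given by Theorem \ref{CI H-func \& soc deg}. Applying (a) with $n=d$ to both $G(d,d)$ and $G(d,d-1)$ and subtracting, the two middle terms cancel and we obtain
\[
G(d,d)-G(d,d-1)\;=\;G(d-1,d)-G(d-1,d-3).
\]
Next I would note that the complete intersection $P_{d-1}/(x_1^3,\ldots,x_{d-1}^3)$ has socle degree $2(d-1)$, so its Hilbert function is symmetric about $d-1$ by Theorem \ref{CI H-func \& soc deg}. In particular,
\[
G(d-1,d)\;=\;G(d-1,\,2(d-1)-d)\;=\;G(d-1,d-2),
\]
which substituted into the previous display gives the desired identity
\[
G(d,d)-G(d,d-1)\;=\;G(d-1,d-2)-G(d-1,d-3).
\]

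There is essentially no obstacle here: part (a) is a Künneth-type computation and part (b) is a two-line manipulation once one invokes (a) and the symmetry of the complete intersection Hilbert function. The only mild point of care is making sure the socle degree and the axis of symmetry are indexed correctly so that $G(d-1,d)$ really does match $G(d-1,d-2)$.
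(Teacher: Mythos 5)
Your proposal is correct and follows essentially the same route as the paper: part (a) is the paper's partition of monomials by the exponent of $x_n$, just phrased as a tensor product decomposition, and part (b) is exactly the paper's argument (subtract two instances of (a) and use the symmetry $G(d-1,d)=G(d-1,d-2)$ coming from the socle degree $2d-2$). No gaps.
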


\begin{proof}
For (a), note that $G(n, d)$ is the number of monomials $x_1^{i_1}\cdots x_n^{i_n}$ with $\sum_{j=1}^ni_j=d$ and $0\le i_j \le 2$ for all $j\in \{1, \ldots, n\}$. The set of such monomials can be partitioned into monomials with $i_n=0, i_n=1$, and $i_n=2$, which correspond to $G(n-1, d), G(n-1, d-1)$, and $G(n-1, d-2)$, respectively. 

Now, (b) follows from (a) and the fact that $G(d-1, d)=G(d-1, d-2)$. The latter fact is because when $n$ is $d-1$, the socle degree of $\displaystyle \frac{P_{d-1}}{(x_1^3, \ldots, x_{d-1}^3)}$ is equal to $2d-2$, and the Hilbert function is symmetric about half of the socle degree by Theorem (\ref{CI H-func & soc deg}).
\end{proof}

The main result of this section is Theorem (\ref{inj for cubes}), which shows that if $\displaystyle n\ge \frac{3d-3}{2}$, the map given by multiplication by a general linear form from $(P_n/J_n)_{d-1}$ to $(P_n/J_n)_d$ is injective. 

The key ingredient is the following description of the initial ideal of $J_n$.

\begin{theorem}\label{initial3}
Using the reverse lexicographic order, we have $\mathrm{in}(J_n)=K_n$.
\end{theorem}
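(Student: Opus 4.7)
My plan is to mirror the two-step strategy used in the proof of Theorem~\ref{initial2} for squares: first verify that the Hilbert functions of $P_n/K_n$ and $P_n/J_n$ agree in every degree, and then establish the containment $K_n\subseteq \mathrm{in}(J_n)$. Since $\mathrm{in}(J_n)$ has the same Hilbert function as $J_n$ and both $K_n$ and $\mathrm{in}(J_n)$ are monomial ideals, these two facts together force $\mathrm{in}(J_n)=K_n$. Set $E(n,d):=\dim_{\F}(P_n/K_n)_d$ and $F(n,d):=\dim_{\F}(P_n/J_n)_d$.

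For the Hilbert function equality, I would show that both $E$ and $F$ satisfy the three-term recurrence $H(n,d)=H(n-1,d)+H(n-1,d-1)+H(n-1,d-2)$ throughout the range $d\le n$. For $F$ this follows from Observation~\ref{cube-rec}(a) together with Remark~\ref{HF of ACI}, once one verifies via Theorem~\ref{CI H-func & soc deg} that the maximum in the ACI formula for $F$ unwraps to a true difference exactly when $d\le n$. For $E$ the recurrence comes from splitting the degree-$d$ monomials of $P_n\setminus K_n$ by the exponent of $x_n\in\{0,1,2\}$: when $d\le n$, the degree-$(n+1)$ class $D_{n+1}$ is out of range, while the classes $D_k$ with $k\le n$ do not involve $x_n$. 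The base case $n\le d-2$ gives $E=F=0$ via Observation~\ref{small variable number}. For the transition at $n=d-1$, every $\mu\in (P_{d-1})_d\setminus K_{d-1}$ must be divisible by $x_{d-1}^2$ (otherwise $\mu\in D_d\subseteq K_{d-1}$); writing $\mu=x_{d-1}^2\mu'$ identifies such $\mu$ with monomials $\mu'\in P_{d-2}$ of degree $d-2$ outside $K_{d-2}$, giving $E(d-1,d)=E(d-2,d-2)$. This matches the parallel identity $F(d-1,d)=F(d-2,d-2)$ provided by Observation~\ref{cube-rec}(b), and a joint induction on $n$ and $d$ delivers $E=F$ in all cases.

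For the containment, I would induct on $n$, assuming $\mathrm{in}(J_m)=K_m$ for all $m<n$. The monomials $x_j^3$ lie in $J_n\subseteq\mathrm{in}(J_n)$. For every $k\le n$, iterated application of Fact~\ref{revlex props}, together with the identity $J_n+(x_k,\dots,x_n)=J_{k-1}+(x_k,\dots,x_n)$, yields
$$\mathrm{in}(J_n)+(x_k,\dots,x_n)=\mathrm{in}(J_{k-1})+(x_k,\dots,x_n)=K_{k-1}+(x_k,\dots,x_n)$$
by the inductive hypothesis. Consequently every monomial of $P_{k-1}$ belonging to $K_{k-1}$ also belongs to $\mathrm{in}(J_n)$; in particular, $D_k\subseteq\mathrm{in}(J_n)$ for every $k\le n$.

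The hard part will be handling $D_{n+1}$, whose generators can genuinely involve $x_n$ and so remain invisible after the reduction $x_n=0$. To address this, I would construct for each $\mu\in D_{n+1}$ an explicit element $f\in J_n$ with $\mathrm{in}(f)=\mu$. Natural building blocks are the reduced forms of $\ell^{n+1}$ and of products $\ell^3\cdot h$ (for monomials $h$ of degree $n-2$) modulo $(x_1^3,\dots,x_n^3)$; appropriate linear combinations should cancel competing dominant terms and expose any prescribed $\mu\in D_{n+1}$ as a leading monomial. The small cases $n=2,3$, worked out explicitly, confirm that this flexibility is available. Combined with the Hilbert function equality and the containment established in degrees $\le n$, this forces $(\mathrm{in}(J_n))_{n+1}=(K_n)_{n+1}$ and completes the proof.
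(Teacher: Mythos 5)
Your overall architecture is the same as the paper's: prove the Hilbert function equality $\dim (P_n/K_n)_d=\dim(P_n/J_n)_d$ via matching three-term recurrences, and separately prove the containment $K_n\subseteq \mathrm{in}(J_n)$, so that equality of monomial ideals follows. Your treatment of the Hilbert function part (including the transition identity $E(d-1,d)=E(d-2,d-2)$ matched against Observation (\ref{cube-rec})(b)) and your reduction of the containment to the classes $D_k$ agree with the paper, and your argument that $D_k\subseteq\mathrm{in}(J_n)$ for $k\le n$ via passing to $P_{k-1}$ and invoking Fact (\ref{revlex props}) is sound.

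However, there is a genuine gap exactly where you flag "the hard part": the claim $D_{n+1}\subseteq\mathrm{in}(J_n)$. You propose to exhibit, for each $\mu\in D_{n+1}$, an explicit $f\in J_n$ with $\mathrm{in}(f)=\mu$ by taking linear combinations of the reductions of $h\,\ell^3$ ($\deg h=n-2$) modulo $(x_1^3,\dots,x_n^3)$, asserting that "appropriate linear combinations should cancel competing dominant terms." No argument is given that such combinations exist; checking $n=2,3$ does not establish the general case, and this claim is precisely the content of the theorem in its critical degree. The paper closes this gap by a different mechanism: writing $n=d-1$, it reformulates the needed statement as $(\mathrm{in}(J_{d-1}):x_{d-1}^2)_{d-2}=(K_{d-2}+(x_{d-1}))_{d-2}$, uses Fact (\ref{revlex props}) to commute the initial ideal with the colon by the last variable, and then computes $J_{d-1}:x_{d-1}^2$ in degree $d-2$ by invoking the Strong Lefschetz Property of the monomial complete intersection $(x_1^3,\dots,x_{d-2}^3,(x_1+\cdots+x_{d-1})^3)$ after the change of variables $x_{d-1}'=x_1+\cdots+x_{d-1}$: multiplication by $x_{d-1}^2$ from degree $d-2$ to degree $d$ is bijective by SLP and the symmetry of the Hilbert function about half the socle degree $2d-2$, so the colon adds nothing new in that degree, and induction on $d$ finishes. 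Some input of this strength (SLP of the complete intersection, or an equivalent rank computation) appears unavoidable; without it your proposal does not constitute a proof.
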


\begin{proof}
For each degree $d\ge 3$, we want to prove that $(\mathrm{in}(J_n))_d=(K_n)_d$. The statement is clear for $d=3$. 

For all $n, d$, we will prove
\begin{enumerate}[label=(\roman*)]
\item \begin{equation}\label{eq}
\mathrm{dim}\left(\frac{P_{n,d}}{(K_n)_d}\right) = \mathrm{dim}\left(\frac{P_{n,d}}{(J_n)_d}\right)
\end{equation}

\item $(K_n)_d\subseteq (\mathrm{in}(J_n))_d$.
\end{enumerate}

We prove both parts by induction on $d$. The statement is clear for $d=3$.

We calculate the right-hand side of (\ref{eq}). Note that 
$\displaystyle \frac{\F[x_1, \ldots, x_n]}{J_n}$ is isomorphic to the cokernel of the map $\displaystyle \times L : \frac{\F[x_1, \ldots, x_{n+1}]}{(x_1^3, \ldots, x_{n+1}^3)} (-1) \to \frac{\F[x_1, \ldots, x_{n+1}]}{(x_1^3, \ldots, x_{n+1}^3)}$, where $L=\sum_{i=1}^{n+1} x_i$. Since the WLP holds for the complete intersection, Remark (\ref{HF of ACI}) yields 

\begin{align}\label{wlp relation}
    \mathrm{dim}\left(\frac{P_{n,d}}{(J_n)_d}\right)&=\mathrm{max}\{G(n+1, d)-G(n+1, d-1), 0\}\\
    &=\nonumber\begin{cases}
        G(n+1, d)-G(n+1, d-1) & \mathrm{if}\ d\le n+1,\\
        0 & \text{otherwise}
    \end{cases}
\end{align}

\noindent where the last equality follows from Theorem (\ref{CI H-func & soc deg}).

For $n\le d-2$, (\ref{wlp relation}) shows that $(J_n)_d$ consists of all homogeneous polynomials of degree $d$ in $P_n$; the same holds for $(K_n)_d$ by Observation (\ref{small variable number}), and therefore we have $(\mathrm{in}(J_n))_d=(J_n)_d=(K_n)_d$, as desired. For the rest of the proof, we will assume $n\ge d-1$.

For $n=d-1$, the left-hand side of (\ref{eq}) is equal to the number of monomials of the form $x_{d-1}^2m$, where $m$ is a monomial in the variables $x_1, \ldots, x_{d-2}$ with $m\notin K_{d-2}$ and $\mathrm{deg}(m)=d-2$; therefore it is equal to $\displaystyle \mathrm{dim}\left(\frac{\F[x_1, \ldots, x_{d-2}]_{d-2}}{(K_{d-2})_{d-2}}\right)$. By the inductive hypothesis and (\ref{wlp relation}), this is equal to
\begin{equation*}
    \mathrm{dim}\left(\frac{\F[x_1, \ldots, x_{d-2}]_{d-2}}{(J_{d-2})_{d-2}}\right)=G(d-1, d-2)-G(d-1, d-3).
\end{equation*}
The right-hand side of (\ref{eq}) is equal to $G(d, d)-G(d, d-1)$ by (\ref{wlp relation}). The equality asserted in (\ref{eq}) now follows from Observation (\ref{cube-rec}) part (b).

For $n\ge d$, let $E(n,d)$ and $F(n,d)$ denote the left-hand side and right-hand side respectively of equation (\ref{eq}). Using the recursive equation from Observation (\ref{cube-rec}) part (a), we obtain
\begin{align*}
    F(n,d)&=(G(n, d)-G(n, d-1))+(G(n, d-1)-G(n, d-2))+(G(n, d-2)-G(n,d-3))\\
    &=F(n-1, d)+F(n-1,d-1)+F(n-1, d-2),
\end{align*}
where the last equality uses equation (\ref{wlp relation}) and the assumption $n\ge d$.

We claim that the same recurrence is satisfied by $E(n, d)$. Indeed, $E(n, d)$ is the number of monomials of degree $d$ of the form $x_1^{a_1}\cdots x_n^{a_n}$ with $a_1, \ldots, a_n\in \{0, 1, 2\}$ that are not divisible by any monomial in $D_k$ for any $k\le d$. These monomials can be partitioned according to the value of $a_n \in \{0, 1, 2\}$. The number of such monomials with $a_n=0$ is $E(n-1, d)$, the number of such monomials with $a_n=1$ is $E(n-1, d-1)$, and the number of such monomials with $a_n=2$ is $E(n-1, d-2)$. (Note that the assumption $n\ge d$ ensures that the set $D_k$ in $P_n$ is the same as the set $D_k$ in $P_{n-1}$ for all $k\le d$.)

The conclusion now follows by induction on $n$. This ends the proof of (i).\vspace{12pt}

Proof of (ii): We observe that it is sufficient to prove that the inclusion $(K_n)_d\subseteq (\mathrm{in}(J_n))_d$ holds for $n=d-1$. Recall that we already proved equality holds for $n\le d-2$. Assuming that the inclusion holds for $n=d-1$, consider $n\ge d$ and induct on $n$. Let $\overline{\ \ \  }$ denote modding out by $x_n$. We have $\overline{(K_n)_d}=(K_{n-1})_d\subseteq (\mathrm{in}(J_{n-1}))_d=\overline{(\mathrm{in}(J_n))_d}$. The first equality is clear from the definition of $K_n$, the middle inclusion is the inductive hypothesis, and the second equality is from properties of the initial ideal with respect to the reverse lexicographic order. Since the generators of $K_n$ in degrees $\le d$ only involve the variables $x_1, \ldots, x_{n-1}$, it follows that $(K_n)_d\subseteq (\mathrm{in}(J_n))_d$.

Now let $n=d-1$. We wish to prove the inclusion $(K_{d-1})_d\subseteq (\mathrm{in}(J_{d-1}))_d$, but in light of (i) this is equivalent to proving the opposite inclusion $(K_{d-1})_d\supseteq (\mathrm{in}(J_{d-1}))_d$. Aiming to prove the latter, we will induct on $d$ with the statement being obvious for $d=3$.

Note that $(K_{d-1})_d$ consists of all monomials of degree $d$ in the variables $x_1, \ldots, x_{d-2}$, as well as all monomials of the form $x_{d-1}m$ where $m$ is a monomial of degree $d-1$ in the variables $x_1, \ldots, x_{d-2}$ (all of these belong to $D_d$); additionally, a monomial of the form $x_{d-1}^2m$ (with $m$ a monomial of degree $d-2$ in the variables $x_1, \ldots, x_{d-2}$) is in $K_{d-1}$ if and only if $m\in K_{d-2}$.

Therefore, the inclusion $(\mathrm{in}(J_{d-1}))_d\subseteq (K_{d-1})_d$ is equivalent to the condition that the only degree $d$ multiples of $x_{d-1}^2$ that are in $\mathrm{in}(J_{d-1})$ are the multiples of $x_{d-1}^3$ and those obtained as $x_{d-1}^2m$ with $m\in (K_{d-2})_{d-2}$. In other words, we want to prove that 
\begin{equation*}
((x_{d-1}^2) \cap \mathrm{in}(J_{d-1}))_d=(x_{d-1}^2K_{d-2}+(x_{d-1})^3)_d
\end{equation*}
or, equivalently,
\begin{equation}\label{desired}
(\mathrm{in}(J_{d-1}): x_{d-1}^2)_{d-2}=(K_{d-2}+(x_{d-1}))_{d-2}.
\end{equation}
In order to calculate the left-hand side of (\ref{desired}), note that 
$\mathrm{in}(J_{d-1}):x_{d-1}^2=\mathrm{in}(J_{d-1}:x_{d-1}^2)$ by Fact (\ref{revlex props}).
We have 
\begin{equation}\label{col}
J_{d-1}:x_{d-1}^2= (x_{d-1}) + (x_1^3, \ldots, x_{d-2}^3, (x_1+\cdots +x_{d-1})^3):(x_{d-1}^2)
\end{equation}
Using the change of variables $x_{d-1}':=x_1+\cdots +x_{d-1}$, note that $(x_1^3, \ldots, x_{d-2}^3, (x'_{d-1})^3)$ is a complete intersection ideal and therefore $\F[x_1, \ldots, x_{d-2}, x_{d-1}']/(x_1^3, \ldots, (x'_{d-1})^3)$ has the Strong Lefschetz Property.
 Thus, the map given by multiplication by $x_{d-1}^2=(x'_{d-1} -x_1-\cdots - x_{d-2})^2$
\begin{equation}\label{map}
{\times}\, x_{d-1}^2:\left( \frac{\F[x_1, \ldots, x_{d-2}, x_{d-1}']}{(x_1^3, \ldots, (x'_{d-1})^3)}\right)_{d-2}\ \longrightarrow \ \left(\frac{\F[x_1, \ldots, x_{d-2}, x_{d-1}']}{(x_1^3, \ldots, x_{d-1}'^3)}\right)_d
\end{equation}
has maximal rank. The socle degree of the complete intersection is $2d-2$. Since the Hilbert function of the complete intersection is symmetric about $d-1$ (half of the socle degree) by Theorem (\ref{CI H-func & soc deg}), it follows that the domain and target of the map in (\ref{map}) have the same dimension; therefore, the map is bijective and so, in particular, injective. This implies
$$
((x_1^3, \ldots, x_{d-2}^3, (x'_{d-1})^3):x_{d-1}^2)_{d-2} = (x_1^3, \ldots, x_{d-2}^3, (x'_{d-1})^3)_{d-2}.
$$
Equation (\ref{col}) now implies
\begin{equation*}
(J_{d-1}:(x_{d-1}^2))_{d-2}=((x_{d-1}) + (x_1^3, \ldots, x_{d-2}^3, (x_1+\cdots +x_{d-1})^3))_{d-2}.
\end{equation*}
Hence, the left-hand side of (\ref{desired}) is equal to the degree $d-2$ piece of the initial ideal of $(x_{d-1}) + (x_1^3, \ldots, x_{d-2}^3, (x_1+\cdots +x_{d-1})^3)$, which is equal to $((x_{d-1})+\mathrm{in}(J_{d-2}))_{d-2}$. By the inductive hypothesis, this is equal to $((x_{d-1})+K_{d-2})_{d-2}$ as desired.
\end{proof}

Before proving the main result of this section, we need a few preliminary results to set the stage for an appeal to Lemma (\ref{ayden}).

The following observation is an immediate consequence of the definition of $K_n$.

\begin{obs}\label{notin}
Let $\mu \in P_n$ be a monomial of degree $d$. Assume $\mu \notin K_n$.

a. If $j\ge d+1$ and $x_j^2$ does not divide $\mu$, then $\mu x_j\notin K_n$.

b. If $\mu $ is divisible by $x_d$ but not by $x_d^2$, then $\mu x_d\notin K_n$.
\end{obs}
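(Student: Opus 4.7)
The plan is to verify both parts by a direct case analysis on the generating set of $K_n$, using that a monomial lies in $K_n$ if and only if it is divisible by one of the generators $x_1^3, \ldots, x_n^3$ or by some element of $\bigcup_{k=3}^{n+1} D_k$. In each part, I will assume for contradiction that the monomial in question is divisible by some generator $g$ and then show that either $g \mid \mu$ already, contradicting $\mu \notin K_n$, or the divisibility is numerically impossible.

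For part (a), first rule out $g = x_i^3$: if $i \neq j$ then $x_i^3 \mid \mu$ directly; if $i = j$ then $x_j^3 \mid \mu x_j$ forces $x_j^2 \mid \mu$, which the hypothesis excludes. For $g = m \in D_k$, recall that $m$ uses only the variables $x_1, \ldots, x_{k-1}$. If $x_j$ does not appear in $m$, then automatically $m \mid \mu$, contradicting $\mu \notin K_n$. If $x_j$ does appear in $m$, then $k - 1 \geq j \geq d+1$, so $\deg m = k \geq d+2$; but this contradicts $m \mid \mu x_j$ since $\deg(\mu x_j) = d+1$.

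Part (b) follows the same template. The cube case is identical, using $x_d^2 \nmid \mu$ to eliminate $x_d^3 \mid \mu x_d$. For $m \in D_k$, the case where $x_d$ does not appear in $m$ again gives $m \mid \mu$, a contradiction. If $x_d$ appears in $m$, then $k \geq d+1$. The case $k \geq d+2$ fails on degree grounds as in (a). The remaining case $k = d+1$ forces $m = \mu x_d$ by equality of degrees; but elements of $D_{d+1}$ are by definition not divisible by $x_d^2$, while $\mu x_d$ is divisible by $x_d^2$ since $x_d \mid \mu$, a contradiction.

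The whole argument is essentially bookkeeping once the correct case split is made; the only point where the definition of $D_k$ does real work is the equality case $k = d+1$ in part (b), where the requirement that $x_{k-1}^2$ not divide elements of $D_k$ is what rules out the potential obstruction. In all other cases it suffices to compare degrees and to note that variables outside the support of $m$ can be freely stripped from a divisibility relation.
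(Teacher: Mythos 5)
Your case analysis is correct and complete: every case is handled, and the one place requiring more than degree-counting (the equality case $k=d+1$ in part (b), settled by the $x_{k-1}^2$-exclusion in the definition of $D_k$) is treated properly. The paper offers no proof at all, calling the observation an immediate consequence of the definition of $K_n$, and your argument is exactly the direct verification it leaves implicit.
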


In the next two results, we will use the following terminology: we say that a monomial $m$ ends in $x_j^{a_j}$ if $m=x_1^{a_1}\cdots x_{j-1}^{a_{j-1}}x_j^{a_j}$. The lemmas below are roughly analogous to Lemmas (\ref{degree}) and (\ref{degree2}), when the defining ideal was generated by squares.

\begin{lemma}\label{deg3}
Let $\mu \in P_d$ be a monomial in the variables $x_1, \ldots, x_d$.

a. If $\mu \notin K_d$ and $\mathrm{deg}(\mu )=\delta\le d-1$ then there exists a monomial $\widetilde{\mu}\in P_d$ with the following three properties: 

(i) $\widetilde{\mu}$ is a multiple of $\mu$ of degree $d$, 
(ii) $\widetilde{\mu}\notin K_d$, and (iii) $\widetilde{\mu}$ ends in $x_d$ or $x_d^2$.

b. If $\mu \notin K_d$, $\mathrm{deg}(\mu )=d$, and $\mu $ does not end in $x_{d-1}^2$, then there exists $\widetilde{\mu}\in P_d$ such that $\widetilde{\mu}$ is a multiple of $\mu$ of degree $d+1$ and $\widetilde{\mu}\notin K_d$.
\end{lemma}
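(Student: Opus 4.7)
Both parts of the lemma seek a multiple $\widetilde{\mu}$ of $\mu$ at a specified higher degree that avoids $K_d$, and the natural tool is Observation \ref{notin}, which provides two permissible ways of multiplying a monomial by a single variable while staying outside $K_d$. The plan is to combine these observations with a careful case analysis based on the exponent $a_d$ of $x_d$ in $\mu = x_1^{a_1}\cdots x_d^{a_d}$.

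For part (a), I would induct on the gap $d - \delta$. In the base case $\delta = d - 1$, exactly one extra variable is needed to reach degree $d$. If $a_d \le 1$, I take $\widetilde{\mu} = \mu\, x_d$: Observation \ref{notin}(a) applied with $j = d \ge \delta + 1$ (noting $x_d^2 \nmid \mu$) gives $\widetilde{\mu} \notin K_d$, and $\widetilde{\mu}$ manifestly ends in $x_d$ or $x_d^2$. When $a_d = 2$ we cannot multiply by $x_d$; however the identity $\sum_{i<d} a_i = d - 3$ forces the existence of at least one index $k < d$ with $a_k \le 1$, and I take $\widetilde{\mu} = \mu\, x_k$ for a suitable such $k$, noting that $\widetilde{\mu}$ still ends in $x_d^2$. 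Verifying $\widetilde{\mu} \notin K_d$ in this subcase relies on the description of $K_d$ from Theorem \ref{initial3}, which translates to the partial-sum inequalities $s_{j-1} + \min(a_j, 1) \le j$ for $2 \le j \le d - 1$, where $s_j = a_1 + \cdots + a_j$; choosing $k$ at least as large as the greatest index where this inequality is tight in $\mu$ ensures the corresponding inequalities remain intact for $\widetilde{\mu}$. For the inductive step $\delta < d - 1$, I would first construct an intermediate multiple $\mu_1$ of degree $\delta + 1$ with $\mu_1 \notin K_d$ using Observation \ref{notin}(a) (typically $\mu_1 = \mu\, x_d$ when $x_d^2 \nmid \mu$), then apply the inductive hypothesis to $\mu_1$.

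For part (b), the first step is to show the hypothesis forces $a_d \ge 1$. Indeed, if $a_d = 0$, then $\mu$ is a degree-$d$ monomial in $x_1, \ldots, x_{d-1}$, and the requirement that no element of $D_d$ divides $\mu$ forces $a_{d-1} = 2$, so $\mu$ would end in $x_{d-1}^2$, contradicting the hypothesis. Once $a_d \ge 1$ is established, the case $a_d = 1$ is immediate: Observation \ref{notin}(b) applied to $\mu \in P_d$ of degree $d$ gives $\widetilde{\mu} = \mu\, x_d \notin K_d$. If $a_d = 2$, multiplication by $x_d$ is forbidden, so I would choose some $k < d$ with $a_k \le 1$ using the same partial-sum analysis as in part (a); note that $\widetilde{\mu}$ must end in $x_d^2$ (because a degree $d+1$ monomial in $P_d$ with $b_d \le 1$ lies in $D_{d+1}$ and hence in $K_d$), and this is automatic since $b_d = a_d = 2$.

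The main obstacle in both parts is the $a_d = 2$ case. The technical heart of the argument is the assertion that an index $k < d$ with $a_k \le 1$ can always be chosen so that $\mu\, x_k \notin K_d$: one argues that the partial-sum equalities $s_{j-1} + \min(a_j, 1) = j$ cannot propagate simultaneously over the rightmost positions, using the total degree constraint on $\mu$ together with the bound $a_i \le 2$ to rule out the pathological configurations. This yields a "safe" choice of $k$ for which every relevant partial-sum inequality is preserved upon multiplication, so that $\widetilde{\mu} = \mu\, x_k$ remains outside $K_d$.
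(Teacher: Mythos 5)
Your overall strategy is sound but genuinely different from the paper's. The paper proves part (a) by induction on $d$ itself: if $x_d^2\nmid\mu$ it multiplies by $x_d$ and invokes Observation (\ref{notin})(a); if $x_d^2\mid\mu$ it writes $\mu=x_d^2\mu'$ with $\mu'\in P_{d-1}$, observes that $\mu\notin K_d$ forces $\mu'\notin K_{d-1}$, applies the inductive hypothesis in $P_{d-1}$, and reattaches $x_d^2$. Part (b) is reduced to part (a) the same way. This peel-off-$x_d^2$ device completely avoids the combinatorial analysis you undertake: there is never any need to identify which variable $x_k$ with $k<d$ is ``safe,'' because the recursion pushes that question down to a smaller ring where Observation (\ref{notin}) again suffices. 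Your route instead encodes non-membership in $K_d$ by the partial-sum inequalities $s_{j-1}+\min(a_j,1)\le j$ and argues directly that a safe $k$ exists via the total-degree count; this is more explicit about the structure of the monomials outside $K_d$ (a ballot-type characterization that could be independently useful), at the price of a delicate case analysis.

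Two points in your write-up need tightening before the argument is complete. First, your prescription ``choose $k$ at least as large as the greatest tight index $j_0$'' is not quite right when $a_{j_0}=0$: tightness then means $s_{j_0-1}=j_0$, so $t_{j_0-1}+\min(b_{j_0},1)=j_0+1$ and $\mu x_{j_0}\in K_d$ (e.g.\ $x_1^2x_3^2\cdot x_4$ is divisible by the element $x_1^2x_3^2x_4\in D_5$). You need $k>j_0$, or $k=j_0$ with $a_{j_0}=1$; your degree count does rule out the scenario where no such $k$ exists, so the existence claim survives, but the selection rule as stated has this edge case. Second, in the inductive step of part (a) you only treat the ``typical'' situation $x_d^2\nmid\mu$; when $\delta\le d-2$ it can happen that every variable $x_j$ with $j\ge\delta+1$ already appears squared (e.g.\ $x_1x_2x_3x_8^2x_9^2$ with $d=9$, $\delta=7$), so Observation (\ref{notin})(a) is unavailable and you must run the same safe-$k$ analysis at every intermediate degree, not just in the base case. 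Both repairs are routine given your setup, but they are where the actual work lives, and the paper's recursion on the number of variables is the cleaner way to dispose of exactly these cases.
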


\begin{proof}
a. We prove this by using induction on $d$. The statement is obvious for $d=1$. Let $d\ge 2$. Note that it suffices to construct a monomial $\widetilde{\mu}$ of degree $\delta +1$, and then iterate the construction.

If $\mu$ is not divisible by $x_d^2$, let $\widetilde{\mu}:=\mu x_d$. We have $\widetilde{\mu} \notin K_d$ by Observation (\ref{notin}) (a).  Otherwise, say $\mu = x_d^2\mu '$, where $\mu '\in P_{d-1}$, $\mathrm{deg}(\mu ')=\delta -2 <d-2$. $\mu \notin K_d$ implies $\mu'\notin K_{d-1}$.  By the inductive hypothesis, there is a $\widetilde{\mu '}\in P_{d-1}$ a multiple of $\mu '$ of degree $\delta -1$ with $\widetilde{\mu '} \notin K_{d-1}$. Then $\widetilde{\mu}=\widetilde{\mu'} x_d^2$ satisfies the desired conditions.

b. The assumption that $\mu $ does not end in $x_{d-1}^2$ means that  $x_d$ must divide $\mu$ (otherwise, $\mu $ would be a monomial of degree $d$ in the variables $x_1, \ldots, x_{d-1}$, not divisible by $x_{d-1}^2$, thus $\mu \in K_d$).

If $\mu$ is not divisible by $x_d^2$, let $\widetilde{\mu}:=\mu x_d$. We have $\widetilde{\mu}\notin K_d$ by Observation (\ref{notin}) (b).
Otherwise, say $\mu = \mu ' x_d^2$ with $\mu' \in P_{d-1}$ and $\mathrm{deg}(\mu')=d-2$. From part (a), there exists $\widetilde{\mu'}\in P_{d-1}$ a multiple of $\mu '$ of degree $d-1$ with $\widetilde{\mu '}\notin K_{d-1}$. It follows that $\widetilde{\mu }:= \widetilde{\mu'} x_d^2 \notin K_d$.
\end{proof}

In the next result, we use the following notation:

\begin{notation}
Let $K_{n, d}\subseteq P_n$ denote the ideal generated by those generators of $K_n$ that have degree $\le d$ (note that these generators only involve the variables $x_1, \ldots, x_{d-1}$). Note that for a monomial $\mu$ of degree $d$, $\mu \in K_n\Leftrightarrow \mu \in K_{n, d}$.
\end{notation}

\begin{lemma}\label{deg4}
Let $\mu \in P_n$ be a monomial of degree $d-1$ which does not end in $x_{d-2}^2$. Assume $\mu \notin K_n$. Then there exists a monomial $M$ which is a multiple of $\mu $ of degree $2n-d+2$ such that $M \notin K_{n, d}$.
\end{lemma}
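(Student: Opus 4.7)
The plan is to split $\mu = \mu_L \mu_H$ where $\mu_L \in \F[x_1, \ldots, x_{d-2}]$ and $\mu_H \in \F[x_{d-1}, \ldots, x_n]$, set $e := \deg \mu_L$ and $s := \deg \mu_H = d - 1 - e$, and then build $M$ by saturating the exponents of every high variable at $2$ and (if necessary) enlarging the low-variable part of $\mu$ so that its total degree becomes exactly $d - 2$.

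Two preliminary points are needed. First, $s \ge 1$: if $s = 0$, then $\mu$ is a degree-$(d-1)$ monomial in $x_1, \ldots, x_{d-2}$, and the condition $\mu \notin K_n$ forces $\mu \notin D_{d-1}$, so the exponent of $x_{d-2}$ in $\mu$ is at least $2$; combined with the absence of cubes and the hypothesis that $\mu$ does not end in $x_{d-2}^2$, this is a contradiction. Second, $\mu_L \notin K_{d-2}$, since any monomial generator of $K_{d-2}$ is also a generator of $K_n$, so if it divided $\mu_L$ it would divide $\mu$, placing $\mu$ in $K_n$.

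Now the construction. If $s = 1$, so $e = d - 2$, take $\widetilde{\mu_L} := \mu_L$. If $s \ge 2$, so $e \le d - 3$, apply Lemma \ref{deg3}(a) with $d-2$ in the role of $d$ (the degree bound $e \le d - 3 = (d-2) - 1$ permits this) to extend $\mu_L \in P_{d-2}$ to a monomial $\widetilde{\mu_L} \in P_{d-2}$ of degree exactly $d - 2$ that is a multiple of $\mu_L$ and not in $K_{d-2}$. In either case, define
\[
M := \widetilde{\mu_L} \cdot \mu_H \cdot \prod_{i=d-1}^{n} x_i^{2 - a_i}.
\]
Then $M$ is a multiple of $\mu$, and $\deg M = (d-2) + s + \bigl(2(n-d+2) - s\bigr) = 2n - d + 2$, as required.

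The remaining step is to verify $M \notin K_{n,d}$. Cubes are avoided because $\widetilde{\mu_L}$ is cube-free and each high-variable exponent in $M$ equals $2$. For $k \le d - 1$, the generators in $D_k$ involve only $x_1, \ldots, x_{k-1}$, and the restriction of $M$ to these variables coincides with the corresponding restriction of $\widetilde{\mu_L} \notin K_{d-2}$, so no such generator divides $M$. The delicate case, and the one I expect to be the main obstacle, is $k = d$: the non-divisibility by $D_d$ amounts to $\sum_{i=1}^{d-2} b_i + \min(1, b_{d-1}) \le d - 1$, and our construction produces $\sum_{i=1}^{d-2} b_i = \deg \widetilde{\mu_L} = d - 2$ together with $b_{d-1} = 2$, so the left-hand side equals exactly $d - 1$, meeting but not violating the bound. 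This threshold-tight inequality is precisely why $\widetilde{\mu_L}$ must be calibrated to have degree exactly $d - 2$ and no larger.
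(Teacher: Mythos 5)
Your proof is correct, and it is a close variant of the paper's argument: both decompose $\mu$ into a low-variable part and a high-variable part, extend the low part to a prescribed degree via Lemma (\ref{deg3}), and then saturate every remaining high variable at exponent $2$ to reach degree $2n-d+2$. The difference is where you cut and how far you pump. The paper splits at $x_{d-1}$ versus $x_d$ and raises the low part all the way to degree $d$ inside $P_{d-1}$, so that non-divisibility by the $D_d$-generators is subsumed in the single statement that the extended low part lies outside $K_{d-1}$; this requires both parts (a) and (b) of Lemma (\ref{deg3}), and part (b) is where the hypothesis that $\mu$ does not end in $x_{d-2}^2$ enters. You instead split at $x_{d-2}$ versus $x_{d-1}$, raise the low part only to degree $d-2$ using part (a) alone, and dispose of $D_d$ by the explicit count $(d-2)+1=d-1<d$; the hypothesis on $x_{d-2}^2$ is used only to rule out $s=0$, i.e., to guarantee $\deg\mu_L\le d-2$ so that the low part never needs to exceed degree $d-2$ (as you correctly note, degree $d-1$ there would create a $D_d$-divisor). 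Your calibration is marginally more economical since it never invokes Lemma (\ref{deg3})(b), while the paper's keeps the uniform invariant ``not in $K_{d-1}$'' across all generator degrees $k\le d$; both arguments are complete.
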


\begin{proof}
If $\mu$ only involves the variables $x_1, \ldots, x_{d-1}$, apply Lemma (\ref{deg3})(b) (with $d-1$ in the role of $d$) to obtain $\widetilde{\mu}$ of degree $d$ in the variables $x_1, \ldots, x_{d-1}$, $\widetilde{\mu }\notin K_n$. Then $M:=\widetilde{\mu } x_d^2\cdots x_n^2$ has the desired properties.

Otherwise, say $\mu =\mu ' x_{j_1}^{\alpha_1}\cdots x_{j_s}^{\alpha_s}$ with $j_1, \ldots, j_s\in \{d, \ldots, n\}$, $\alpha_1, \ldots, \alpha_s\in \{1, 2\}$, and $\mu ' \in P_{d-1}$ has degree $\le d-2$. Apply Lemma (\ref{deg3})  to $\mu '$ in the role of $\mu $ and $d-1$ in the role of $d$. It follows that there is a $\widetilde{\mu '}\in P_{d-1}$ of degree $d$ which is a multiple of $\mu '$ and $\widetilde{\mu '}\notin K_{d-1}$. Now $M:=\widetilde{\mu'}x_d^2\cdots x_n^2$ has the desired properties.
\end{proof}

\begin{remark}\normalfont{
The assumption that $\mu$ does not end in $x_{d-2}^2$ is crucial for Lemmas (\ref{deg3}) and (\ref{deg4}). Indeed, let $\mu\in P_n$ have $\deg{(\mu)}=d-1$, $\mu$ ends in $x_{d-2}^2$, and $\mu\notin K_n$. We claim that if a monomial $M$ is a multiple of $\mu$ with degree $2n-d+2$, then $M\in K_{n,d}$. As a multiple of $\mu$, we may write
\begin{equation*}
    M=\mu\cdot x_{1}^{\alpha_1}\cdots x_{n}^{\alpha_n}\quad\text{where}\quad \sum_{j=1}^{n}\alpha_j=2n-2d+3.
\end{equation*}
If $\alpha_j\geq 3$ for any $j$, then $M$ is a multiple of a cube and therefore $M\in K_{n,d}$ provided $d\geq 3$. Now suppose that $\alpha_j\in\{0,1,2\}$ for all $j$. There are two cases:
\begin{enumerate}
    \item If $\alpha_j\neq 0$ for some $j\in\{1,\dots,d-1\}$, then consider $\widetilde{\mu}=\mu\cdot x_j$. As $\mu$ ends in $x_{d-2}^2$, it follows that $\widetilde{\mu}$ is not divisble by $x_{d-1}^2$, so $\widetilde{\mu}\in K_{n,d}$. Moreover, $\widetilde{\mu}$ is a divisor of $M$ and so $M\in K_{n,d}$ in this case.
    \item If $\alpha_j=0$ for all $j\in\{1,\dots,d-1\}$, then $M$ has the form $M=\mu\cdot x_{d}^{\alpha_d}\cdots x_{n}^{\alpha_n}$. The maximum possible degree of $M$ is given by choosing $\alpha_j=2$ for all $d\leq j\leq n$. Thus,
    \begin{equation*}
        \deg{(M)}\leq (d-1)+2(n-d+1)=2n-d+1.
    \end{equation*}
    This contradicts the assumption that $\deg{(M)}=2n-d+2$.
\end{enumerate}
Because of this, the case when $\mu$ ends in $x_{d-2}^2$ will need to be treated separately in the proof of our theorem below.}
\end{remark}

\begin{theorem}\label{inj for cubes}
If $n\ge \frac{3d-3}{2}$, then the multiplication by a general linear form from $(P_n/J_n)_{d-1}$ to $(P_n/J_n)_d$ is injective.
\end{theorem}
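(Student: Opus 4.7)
The plan is to mirror the template of Theorem~\ref{injectivity for squares}. By Proposition~\ref{wiebe} and Theorem~\ref{initial3}, injectivity reduces to the case where $J_n$ is replaced by its initial ideal $K_n$, and since the map from degree $d-1$ to degree $d$ does not see higher-degree generators, we may further replace $K_n$ by $K_{n,d}$. Setting $\ell := x_1 + \cdots + x_n$, suppose $Q = \sum_{\mu} a_{\mu} \mu$ is a form of degree $d-1$ with $\ell Q \in K_{n,d}$, where the sum ranges over monomials $\mu$ of degree $d-1$ outside $K_{n,d}$. The goal is to prove $a_{\mu} = 0$ for every such $\mu$. The cases $d \le 2$ are trivial, so assume $d \ge 3$, and split on whether $\mu$ ends in $x_{d-2}^2$.

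When $\mu$ does not end in $x_{d-2}^2$, Lemma~\ref{deg4} yields a multiple $M$ of $\mu$ with $\deg M = 2n - d + 2 \ge 2d - 1$ (using $n \ge \frac{3d-3}{2}$) and $M \notin K_{n,d}$; then Lemma~\ref{ayden} immediately gives $a_{\mu} = 0$.

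When $\mu$ ends in $x_{d-2}^2$, write $\mu = x_1^{a_1} \cdots x_{d-3}^{a_{d-3}} x_{d-2}^2$ with $a_1 + \cdots + a_{d-3} = d - 3$. As noted in the Remark preceding the theorem, no multiple of $\mu$ of degree $2d - 1$ avoids $K_{n,d}$, so Lemma~\ref{ayden} cannot be invoked directly. Instead, pick any $j \in \{d, \ldots, n\}$ (nonempty since $n \ge \frac{3d-3}{2} \ge d$) and consider the degree-$d$ monomial $N := \mu x_j$. A direct check shows $N \notin K_n$: the restriction of $N$ to $x_1, \ldots, x_{d-1}$ is $\mu$, which has degree $d-1 < d$, ruling out a $D_d$-divisor; and any hypothetical $D_k$-divisor for $k < d$ would also divide $\mu$, contradicting $\mu \notin K_n$. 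Since $\ell Q \in K_{n,d}$ and $N \notin K_{n,d}$, the coefficient of $N$ in $\ell Q$ vanishes, producing the relation
\[
a_{\mu} + a_{(\mu/x_{d-2})\,x_j} + \sum_{\substack{l \le d-3 \\ a_l \ge 1}} a_{(\mu/x_l)\,x_j} = 0.
\]
Each monomial other than $\mu$ appearing here has $x_j$ (with $j \ge d$) as its highest-indexed variable, so none ends in $x_{d-2}^2$. An analogous verification shows they all lie outside $K_n$: their $x_1, \ldots, x_{d-2}$-part has degree strictly less than $d-1$, and any $D_k$-divisor for $k \le d-2$ would also divide $\mu$. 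By the previous case each of their coefficients vanishes, forcing $a_{\mu} = 0$.

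The principal obstacle is Case~2: Lemma~\ref{deg4}'s construction fails exactly when $\mu$ ends in $x_{d-2}^2$, as the Remark highlights. The fix is to trade a single application of Lemma~\ref{ayden} for an explicit linear relation among the $a_{\mu}$'s extracted from $\ell Q \in K_{n,d}$, reducing the problem to the already-handled Case~1. The elementary but essential checks that $N$ and each auxiliary $(\mu/x_l)\,x_j$ escape $K_n$ are the heart of the argument.
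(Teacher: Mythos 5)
Your proof is correct and follows essentially the same route as the paper: reduce to the initial ideal $K_{n,d}$ via Proposition (\ref{wiebe}) and Theorem (\ref{initial3}), handle monomials not ending in $x_{d-2}^2$ by Lemmas (\ref{deg4}) and (\ref{ayden}), and handle those ending in $x_{d-2}^2$ by extracting the linear relation from the vanishing coefficient of $\mu x_j$ in $\ell Q$ (the paper takes $j=d$; you allow any $j\in\{d,\dots,n\}$). The only difference is that you spell out the membership checks ($\mu x_j\notin K_n$ and the auxiliary divisors lying outside $K_n$) that the paper leaves to Observation (\ref{notin}).
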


\begin{proof}
By Proposition (\ref{wiebe}), it suffices to prove injectivity when $J_n$ is replaced by $\mathrm{in}(J_n)=K_n$.
Also note that the injectivity of the map in question is not affected by the generators of $K_n$ of degree $>d$, so we may replace $K_n$ by $K_{n, d}$

Assume $Q=\sum a_{\mu } \mu $ is in the kernel of the map given by multiplication by $L=\sum_{i=1}^n x_i$ from $(P_n/K_n)_{d-1}$ to $(P_n/K_n)_d$, where the summation is over all monomials $\mu $ of degree $d-1$. We need to prove that $a_{\mu }=0$ for all $\mu \notin K_n$.

For each monomial $\mu $ of degree $d-1$ that does not end in $x_{d-2}^2$, Lemma (\ref{deg4}) shows that we can construct a monomial $M$ of degree $2d-1$ which is a multiple of $\mu$ and is not in $K_{n, d}$, and Lemma (\ref{ayden}) implies $a_{\mu }=0$.

Consider $\mu $ of degree $d-1$ which ends in $x_{d-2}^2$. Then $\mu x_d\notin K_n$. The assumption $LQ=0$ implies that the coefficient of $\mu x_d$ in $LQ$ must be zero, and therefore 
$\sum_{\nu } a_{\nu}=0$, where the summation is over all divisors $\nu $ of $\mu x_d$ of degree $d-1$. One of these divisors is $\mu $; the others are $\mu ' x_d$ where $\mu '$ is a divisor of $\mu $ of degree $d-2$. Lemma (\ref{deg4}) implies that $a_{\mu ' x_d}=0$ for all such $\mu ' x_d$, and therefore $a_{\mu }$ is also zero.
\end{proof}

The following shows that the Fr\"oberg conjecture holds for $n+2$ linear forms of degree 3 up to $D:=\lfloor\frac{2n}{3}\rfloor+1$:
\begin{corollary}
Let $R=P_{n}/(x_{1}^{3},\dots,x_{n}^{3},\ell_{1}^{3},\ell_{2}^{3})$ where $\ell_{1}$ and $\ell_{2}$ are general linear forms. We have
\begin{equation*}
    \big[\mathrm{HS}(R,t)\big]_{D}=\left[\frac{(1-t^{3})^{n+2}}{(1-t)^{n}}\right]_{D}.
\end{equation*}
where $D:=\lfloor\frac{2n}{3}\rfloor+1$.
\end{corollary}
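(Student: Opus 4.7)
My plan is to derive the corollary by applying Observation (\ref{Froberg}) to the auxiliary ring
\begin{equation*}
\tilde R := P_{n+1}/J_{n+1}=P_{n+1}/(x_1^3,\ldots,x_{n+1}^3,(x_1+\cdots+x_{n+1})^3),
\end{equation*}
which is a quotient of $P_{n+1}$ by $n+2$ cubic forms. Since the number of generators is one more than the number of variables, Stanley's theorem \cite{Stanley-78} guarantees Fr\"oberg's conjecture for $\tilde R$, so in particular (\ref{froberg}) holds for $\tilde R$ up to any degree where $\tilde R$ is nonzero, and certainly up to degree $D$.

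Next I would apply Theorem (\ref{inj for cubes}) to $\tilde R$ for every $1\le d\le D$. The required hypothesis $n+1\ge (3d-3)/2$ rearranges to $d\le (2n+5)/3$, and a routine case analysis on the residue of $n$ modulo $3$ shows this inequality holds for $d=D=\lfloor 2n/3\rfloor+1$, and hence for every smaller $d$ as well. This gives injectivity---in particular maximal rank---for multiplication by a general linear form $L$ from $(\tilde R)_{d-1}$ to $(\tilde R)_d$ in every such degree.

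Finally, invoking Observation (\ref{Froberg}) with $\tilde R$ in place of $R$ translates the maximal-rank conclusion into Fr\"oberg's equality for $\tilde R/(L)$ up to degree $D$, where $\tilde R/(L)$ is viewed as a quotient of $P_n$ by $n+2$ cubic forms. Eliminating one variable using $L$ exhibits $\tilde R/(L)$ as a specific instance of the form $P_n/(x_1^3,\ldots,x_n^3,f^3,g^3)$; combined with Fr\"oberg's lower-bound inequality \cite{Froberg} (which guarantees that equality for one instance forces equality for the generic one), this yields the desired Hilbert series identity for $R=P_n/(x_1^3,\ldots,x_n^3,\ell_1^3,\ell_2^3)$. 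The only step that requires any care is the numerical matching between the threshold of Theorem (\ref{inj for cubes}) and the stated value of $D$, which is settled by the residue-modulo-$3$ computation above.
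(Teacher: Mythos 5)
Your proposal is correct and is exactly the paper's intended argument: the paper's one-line proof ("follows at once from the preceding theorem and Observation (\ref{Froberg})") implicitly applies Theorem (\ref{inj for cubes}) to $P_{n+1}/J_{n+1}$ and then quotients by a general linear form, which is precisely what you spell out. Your numerical check that $D=\lfloor 2n/3\rfloor+1\le (2n+5)/3$ is the right verification and it goes through in all three residue classes.
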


\begin{proof}
This follows at once from the preceding theorem and Observation (\ref{Froberg}).
\end{proof}

\begin{remark}
Although we do not prove it in general, experimental evidence from \texttt{Macaulay2} indicates that the bound in Theorem (\ref{inj for cubes}) is sharp when the number of variables $n\le 10$.
\end{remark}

\section{Failure of injectivity for ideals generated by squares}

Recall the ideal $I_n:=\big(x_1^2,\dots,x_{n}^2, (x_1+\cdots+x_n)^2\big)$ in the ring $P_n$. Further, $\mathrm{in}(I_n)$ is the initial ideal of $I_n$ with respect to the reverse lexicographic order. The main result of this section is the following:

\begin{theorem}\label{square injectivity failure}
Let $d>2$ and $n<3d-2$. For $\ell$ a general linear form, the multiplication by $\ell $ map from $(P_n/I_n)_{d-1}$ to $(P_n/I_n)_d$ fails to be injective.
\end{theorem}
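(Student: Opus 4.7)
The plan is to exhibit an explicit nonzero element $Q\in (P_n/I_n)_{d-1}$ in the kernel of multiplication by $\ell$. It is convenient to work in a symmetric presentation: after a linear change of coordinates, $R := P_n/I_n$ is identified with $A/(e)$, where $A = \F[y_0,\ldots,y_n]/(y_0^2,\ldots,y_n^2)$ and $e = y_0 + \cdots + y_n$, the variables $y_0,\ldots,y_n$ corresponding to the $n+1$ linear forms whose squares generate $I_n$. A general linear form in $R$ lifts to $\ell=\sum_{i=0}^n c_i y_i$ for generic $c_i$, modulo the relation $\sum_i y_i=0$. Finding a nonzero kernel element becomes equivalent to finding $Q\in A_{d-1}$ with $\ell Q\in e\cdot A_{d-1}$ (so $\ell Q=0$ in $R_d$) and $Q\notin e\cdot A_{d-2}$ (so $Q$ is nonzero in $R_{d-1}$); equivalently, a nontrivial class in the first Koszul homology of $(\ell,e)$ acting on $A$.

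The construction must exploit the bound $n\leq 3d-3$. This guarantees at least $d-1$ variables $y_{2d-1},\ldots,y_n$ lying outside the core range $y_1,\ldots,y_{2d-2}$ identified by Theorem \ref{initial2}, and the plan is to write $Q$ as a combination of degree-$(d-1)$ monomials that involve these extra variables heavily, with coefficients chosen as explicit polynomials in $c_0,\ldots,c_n$. The idea is that for such a monomial $\mu$, the product $\ell\mu$ produces many terms divisible by some $y_i^2$ (which vanish in $A$), leaving only a residue that can be re-expressed as a multiple of $e$ using the relation $\sum_i y_i=0$ in $R$. After $Q$ is written down, the inclusion $\ell Q\in e A_{d-1}$ will be a direct polynomial identity, and non-triviality of $Q$ modulo $eA_{d-2}$ can be verified by picking the reverse-lex leading monomial of $Q$ and invoking Theorem \ref{initial2} to see that this monomial is not in $\mathrm{in}(I_n)$.

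The main obstacle is pinning down the explicit coefficients of $Q$. Since $n\leq 3d-3$ is sharp by Theorem \ref{injectivity for squares}, the construction just barely succeeds at the boundary, so the coefficients are expected to have a tight combinatorial form, likely organized by the natural $S_{n+1}$-symmetry on the linear forms or expressible as a determinantal or Vandermonde-type polynomial in the $c_i$. As a fallback, one could instead use Observation \ref{Froberg} to translate the problem into showing that the Hilbert function of $P_{n-1}/(x_1^2,\ldots,x_{n-1}^2,\ell_1^2,\ell_2^2)$ strictly exceeds its Fr\"oberg prediction in degree $d$ (for instance by comparison with the upper bound $a_{n-1,d}$ of \cite{CriQ-Lun-Nen}), but this indirect route would only give failure of injectivity abstractly rather than producing the concrete witness envisioned in Section 5.
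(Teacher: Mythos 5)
Your overall strategy — exhibit an explicit nonzero kernel element — is indeed the route the paper takes, but your write-up stops exactly where the proof begins. The entire content of the paper's argument is the explicit construction you defer: it defines recursive coefficient functions $\varepsilon(t)$ (with $\varepsilon(0)=1$, $\varepsilon(t)=-t\varepsilon(t-1)/(d-t)$) and $\psi(t)$, sets
$Q=\sum_{|\I|=d-1} a_\I\bigl(\sum_{|\J|=n-2d+2}\varepsilon_\I^\J a_\J\bigr)x_\I$
where the $a_i$ are the coefficients of the general linear form, and then (i) verifies by a combinatorial identity (Observation \ref{epsilon and psi sums}) that $LQ\equiv Q'(x_1+\cdots+x_n)^2 \pmod{(x_1^2,\ldots,x_n^2)}$ for an explicitly given $Q'$, so $LQ\in I_n$, and (ii) shows $Q\notin I_n$ for generic $a_i$ by observing that the coefficients $b_{i_1\cdots i_{d-1}}$ of $Q$ are linearly independent as polynomials in $a_1,\ldots,a_n$, whereas membership in $I_n$ would force them to satisfy fixed linear relations coming from the rank deficiency $\binom{n}{d-3}<\binom{n}{d-1}$. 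None of this is guessable from "tight combinatorial form, likely organized by $S_{n+1}$-symmetry or Vandermonde-type"; as a proof, your proposal has a gap precisely at the step that carries all the difficulty.

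Two further points. First, your use of the hypothesis is backwards: $n\le 3d-3$ guarantees \emph{at most} $d-1$ variables beyond $x_{2d-2}$ (with equality only when $n=3d-3$), not at least $d-1$; the shortage of such variables is what breaks the injectivity argument of Theorem \ref{injectivity for squares}, and the kernel element must be built in spite of, not because of, an abundance of outer variables. Second, your fallback does not work even "abstractly": Theorem 8 of \cite{CriQ-Lun-Nen} gives an \emph{upper} bound $\dim\le a_{n,d}$, while failure of injectivity requires the dimension of the quotient by the linear form to \emph{strictly exceed} the Fr\"oberg prediction — a lower bound. Establishing that strict excess in this range is exactly Conjecture 2 of \cite{CriQ-Lun-Nen}, which is open there and which the paper instead \emph{deduces} (for $n=3d-4$) from Theorem \ref{square injectivity failure}; relying on it would be circular. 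Your proposed non-triviality check via the reverse-lex leading monomial of $Q$ is sound in principle (a leading monomial outside $\mathrm{in}(I_n)$ does certify $Q\notin I_n$), but it too requires knowing $Q$ and its leading coefficient explicitly.
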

Note that when $n=3d-3$ and $d\ge 3$, we have $\dim(P_n/I_n)_d \ge \dim(P_n/I_n)_{d-1}$. Hence, the failure of injectivity for the multiplication by a general linear form is indeed equivalent to the failure of WLP at degree $d$.

\begin{remark}\normalfont{
The first case where Theorem (\ref{square injectivity failure}) applies is $d=3, n=6$. For this case, the failure of injectivity also follows from a result in \cite{Froberg-Hollman}, where it is shown that an ideal generated by the squares of 7 generic forms of degree 2 in 5 variables fails to have the Hilbert function predicted by Fr\"{o}berg (equation (\ref{froberg})) in degree 3.}
\end{remark}
The proof of Theorem~(\ref{square injectivity failure}) is constructive; an explicit nonzero element in the kernel of multiplication by $\ell$ will be provided. In preparation for this, we define two recursive functions. For $0\le t \le d-1$, define $\varepsilon(t)$ as follows: $\varepsilon(0)=1$, and for $t\ge 1$, $\displaystyle \varepsilon(t):=-\frac{t\varepsilon(t-1)}{d-t}$. For any two subsets $\mathcal{I}$ and $\mathcal{J}$ of $\{1\, \ldots, n\}$, let $\varepsilon_\mathcal{I}^\mathcal{J}:=\varepsilon(|\mathcal{I} \cap \mathcal{J}|)$. Further, we define $\psi (t)$ by $\psi (0)=1, \psi (1)=-2/(d-2)$, and for $2\le t\le d-2$, $\psi (t)$ satisfies 
$$
\left(\begin{array}{c} d-t\\ 2 \\ \end{array}\right) \psi (t) +t(d-t)\psi (t-1) + \left( \begin{array}{c} t\\ 2 \\ \end{array}\right) \psi (t-2)=0.
$$
For any two subsets $\I$ and $\J$ of $\{1, \ldots, n\}$, define the notation $\psi _\I^\J := \psi (|\I \cap \J|)$.

These two functions have the following properties that will prove useful later.

\begin{obs}\label{epsilon and psi sums}
Let $d>2$ and $n<3d-2$. Then the functions $\varepsilon(t)$ and $\psi(t)$ satisfy the following:
\begin{enumerate}[label=(\alph*)]
    \item if $\I$ and $\J$ are subsets of $\{1,\ldots, n\}$ with $|\I|=d$ and $|\J|=n-2d+2$, we have
    \begin{equation*}
        \sum_{i\in\I}  \varepsilon_{\I\setminus\{i\}}^{\J} =\left\lbrace \begin{array}{cll} d & \mathrm{if}\ \I \cap \J=\emptyset, \\ 0 & \mathrm{otherwise}. \end{array}\right.
    \end{equation*}
    \item if $\I$ and $\ml$ are subsets of $\{1, \ldots, n\}$ with $|\I|=d$ and $|\ml|=d-2$, we have 
    $$\sum_{\{u, v\}\subseteq \I}\psi_{\I\, \backslash \, \{u, v\}}^\ml =\left\lbrace \begin{array}{cll} \left(\begin{array}{c} d\\ 2 \\ \end{array}\right) & \mathrm{if}\ \I \cap \ml =\emptyset, \\ 0 & \mathrm{otherwise}. \end{array}\right.$$
\end{enumerate}
\end{obs}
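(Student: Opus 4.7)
The plan is to prove both identities by a straightforward case analysis on the size of the relevant intersection, using the defining recurrences of $\varepsilon$ and $\psi$ as the engine. In each case the ``$\mathrm{otherwise}$'' conclusion reduces, after collecting like terms, to exactly the recurrence that defines the function.

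For part (a), I would first handle the case $\I\cap\J=\emptyset$: every index $i\in\I$ then satisfies $(\I\setminus\{i\})\cap\J=\emptyset$, so each summand equals $\varepsilon(0)=1$ and the sum is $d$. Otherwise, set $t:=|\I\cap\J|\ge 1$ and partition $\I$ into its $t$ elements lying in $\J$ and its $d-t$ elements lying outside $\J$. Removing an element of the first type drops the intersection size to $t-1$, while removing one of the second type leaves it at $t$. This yields
\begin{equation*}
\sum_{i\in\I}\varepsilon_{\I\setminus\{i\}}^{\J}=t\,\varepsilon(t-1)+(d-t)\,\varepsilon(t),
\end{equation*}
which vanishes precisely because the recursion $\varepsilon(t)=-t\varepsilon(t-1)/(d-t)$ rearranges as $(d-t)\varepsilon(t)+t\varepsilon(t-1)=0$.

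For part (b), I would run the analogous argument on the $\binom{d}{2}$ pairs $\{u,v\}\subseteq\I$, classifying each pair by how many of $u,v$ lie in $\I\cap\ml$. With $t:=|\I\cap\ml|$, the three cases contribute $\binom{t}{2}$ pairs (both in $\I\cap\ml$, dropping the intersection to $t-2$), $t(d-t)$ pairs (exactly one in $\I\cap\ml$, dropping it to $t-1$), and $\binom{d-t}{2}$ pairs (neither in $\I\cap\ml$, leaving it at $t$). Summing gives
\begin{equation*}
\sum_{\{u,v\}\subseteq\I}\psi_{\I\setminus\{u,v\}}^{\ml}=\binom{d-t}{2}\psi(t)+t(d-t)\psi(t-1)+\binom{t}{2}\psi(t-2),
\end{equation*}
which is exactly the left-hand side of the recurrence defining $\psi(t)$ and is therefore zero for $t\ge 1$. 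When $t=0$, every summand is $\psi(0)=1$ and the total is $\binom{d}{2}$, as claimed.

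The only obstacle is a boundary technicality: when $t=1$ the symbol $\psi(t-2)=\psi(-1)$ is not defined, but its coefficient $\binom{1}{2}$ vanishes, so one harmlessly adopts the convention $\psi(-1)=0$; the case $t=d-1$ cannot occur since $|\I\cap\ml|\le|\ml|=d-2$, so the recurrence is valid throughout the range we need. The hypotheses $d>2$ and $n<3d-2$ play no role in the identity itself beyond guaranteeing the existence of subsets of the prescribed sizes; all of the content lives in the two recurrences and the combinatorial partitioning above.
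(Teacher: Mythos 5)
Your proof is correct and follows essentially the same case analysis as the paper's: classify the summands by how the removed indices meet the relevant intersection and observe that the resulting combination is exactly the defining recurrence of $\varepsilon$ (resp.\ $\psi$); the only cosmetic difference is that the paper checks the $t=1$ case of (b) by a direct computation with $\psi(1)=-2/(d-2)$, whereas you absorb it into the recurrence via the convention $\psi(-1)=0$. One small correction: the hypothesis $n<3d-2$ is not purely decorative in part (a) --- it forces $|\I\cap\J|\le n-2d+2\le d-1$, which is exactly what guarantees that $\varepsilon(t)$ is only ever evaluated where it is defined (the recursion divides by $d-t$) and that the cancellation $t\varepsilon(t-1)+(d-t)\varepsilon(t)=0$ applies to every case that occurs.
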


\begin{proof}
(a) Note that we have $n-2d+2\le d-1$, so $|\I \cap \J|$ is also bounded above by $d-1$. In the case when $\I\cap \J=\emptyset $, each of the $d$ terms in the summation is equal to $\varepsilon(0)=1$, while in the case when $|\I\cap \J|=t>0$, there are $t$ terms that are equal to $\varepsilon(t-1)$ (when $i\in \I\cap \J$), and $d-t$ terms that are equal to $\varepsilon(t)$ (when $i\notin \I\cap \J$). Therefore, the summation is equal to $t\varepsilon(t-1)+(d-t)\varepsilon(t)=0$.

(b) If $\I \cap \ml=\emptyset$, then each term in the summation is equal to $\psi (0)=1$. If $|\I \cap \ml|=1$, then $d-1$ of the terms in the summation are equal to $\psi (0)$ (when $u$ or $v$ is in the intersection) and the remaining $\displaystyle \left( \begin{array}{c} d-1 \\ 2 \\ \end{array}\right)$ terms are equal to $\psi (1)$, whence the sum is equal to $(d-1)\psi (0) +  \left( \begin{array}{c} d-1 \\ 2 \\ \end{array}\right)\psi (1) =0$. Lastly, if $|\I \cap \ml|=t\ge 2$, then $\displaystyle \left(\begin{array}{c} t \\ 2\\ \end{array}\right) $ of the terms in the summation are equal to $\psi (t-2)$ (when both $u$ and $v$ are in the intersection), $t(d-t)$ of the terms are equal to $\psi (t-1)$ (when exactly one of $u$ or $v$ is in the intersection), and the remaining $\displaystyle \left(\begin{array}{c} d-t \\ 2 \\ \end{array}\right) $ terms are equal to $\psi (t)$ (when neither $u$ nor $v$ are in the intersection); therefore, the summation is equal to 0 by the definition of $\psi (t)$.
\end{proof}

We proceed to the proof of Theorem (\ref{square injectivity failure}).

\begin{proof}[Proof of (\ref{square injectivity failure})]

For any subset $\I\subseteq \{1, \ldots, n\}$, we let $a_{\I}:=\prod_{i\in \I} a_i$ and $x_{\I}:=\prod_{i\in \I}x_i$. Define a form of degree $d-1$ as follows:
$$
Q:=\sum_{|\I|=d-1} a_\I\left(\sum_{|\J|=n-2d+2}  \varepsilon_{\I}^{\J}a_\J\right)x_\I
$$
where the summations are over all subsets $\I, \J\subseteq \{1, \ldots, n\}$ with the indicated cardinalities.

We verify that $LQ\in J_n$. 

For each $\I \subseteq\{1, \ldots, n\}$ with $|\I|=d$, the coefficient of $x_{\I}$ in $LQ$ is
$\displaystyle 
\sum_{i\in \I}a_i b_{\I \, \backslash \{i\}},
$
where $$ b_{\I \, \backslash \, \{i\}}=a_{\I \, \backslash \, \{i\}} \cdot \sum_{|\J|=n-2d+1} \varepsilon_{\I \, \backslash \, \{i\}}^\J a_\J$$ is the coefficient of $x_{\I \, \backslash \, \{i\}}$ in $Q$.
Since $a_ia_{\I \, \backslash \, \{i\}}=a_{\I}$, we obtain 
\begin{equation}\label{cff}
a_{\I}\left( \sum_{i\in\I} \sum_{|\J|=n-2d+1} \varepsilon_{\I \, \backslash \, \{i\}}^\J a_\J \right) = a_{\I}\sum_{|\J|=n-2d+2}\left(\sum_{i\in \I}  \varepsilon_{\I \, \backslash \, \{i\}}^\J \right) a_\J = d\cdot a_{\I} \sum_{\substack{\J\cap \I=\emptyset \\ |\J|=n-2d+2}}a_\J
\end{equation}
where the second equality follows from part (a) of Observation (\ref{epsilon and psi sums}).

We will now define a polynomial $Q'$ of degree $d-2$ such that \begin{equation}\label{congr}LQ\equiv Q'(x_1+\cdots +x_n)^2\ \ \ \ \mathrm{mod} \ (x_1^2, \ldots, x_n^2).\end{equation} This will prove $LQ\in J_n$.

We define $Q'$ as follows:
$$
Q'=\sum_{|\K|=d-2} \left(\sum_{|\ml|=d-2} \psi_\K^\ml \, \frac{a_1\cdots a_n}{a_\ml}\right)  x_\K
$$
where the summations are over all subsets $\K, \ml\subseteq \{1, \ldots, n\}$ with the indicated cardinality.
Let $\displaystyle c_\K:=\sum_{|\ml|=d-2} \psi_\K^\ml \, \frac{a_1\cdots a_n}{a_\ml}$ denote the coefficient of $x_\K$ in $Q'$.

In order to verify equation (\ref{congr}), we calculate the coefficient of $x_\I$ in $Q'(x_1+\cdots +x_n)^2$ for every subset $\I\subseteq \{1, \ldots, n\}$ with $|\I|=d$, and we confirm that it is equal to the corresponding coefficient of $LQ$ (calculated in (\ref{cff})).
The coefficient in question is equal to 
$$
\sum_{\{u, v\}\subseteq \I} c_{\I\, \backslash \{u, v\}}=
\sum_{\{u, v\} \subseteq \I}\,  \sum_{|\ml|=d-2} \psi_{\I\, \backslash \, \{u, v\}}^\ml\frac{a_1\cdots a_n}{a_\ml} = \sum_{|\ml|=d-2} \left(\sum_{\{u, v\} \subseteq \I}\psi_{\I\, \backslash \, \{u, v\}}^\ml\right) \frac{a_1\cdots a_n}{a_\ml}.
$$

\noindent By part (b) of Observation (\ref{epsilon and psi sums}), the coefficient of $x_\I$ in $Q'(x_1+\cdots +x_n)^2$ is equal to 
\begin{equation}\label{cff2}
\binom{d}{2}\sum_{\substack{L\cap I=\emptyset \\ |\ml|=d-2}} \frac{a_1\cdots a_n}{a_\ml}.
\end{equation}
For each term in (\ref{cff2}), $\displaystyle \frac{a_1\cdots a_n}{a_\ml}=a_\I a_\J$ where $\J\subseteq \{1, \ldots, n\}$ is disjoint from $\I$ and satisfies $|\J|=n-2d+2$, whence the right-hand side of (\ref{cff2}) is the same as the right-hand side of (\ref{cff}).

It remains to be shown that $Q\notin I_n$ for general choices of $a_1, \ldots, a_n$.

Assume $Q\in I_n$.  Let $b_{i_1\cdots i_{d-1}}$ denote the coefficient of $x_{i_1}\cdots x_{i_{d-1}}$ for each $\{i_1, \ldots, i_{d-1}\}\subseteq \{1, \ldots, n\}$. The assumption that $Q\in I_n$ implies that the coefficients $b_{i_1\cdots i_{d-1}}$ satisfy certain linear equations.

Indeed, if $\displaystyle C=\sum_{j_1<\cdots < j_{d-3}}c_{j_1\cdots j_{d-3}}x_{j_1}\cdots x_{j_{d-3}}$ is such that 
$Q=C(x_1+\cdots + x_n)^2$ (mod $(x_1^2, \ldots, x_n^2)$), then 
\begin{equation}\label{rel}
b_{i_1\cdots i_{d-1}}=\sum_{u<v} c_{i_1\cdots \hat{i_u}\cdots\hat{i_v}\cdots i_{d-1}}
\end{equation}
Let $\mathcal{C}$ denote the matrix with rows indexed by subsets $\{i_1, \ldots, i_{d-1}\}\subseteq \{1, \ldots, n\}$, columns indexed by subsets $\{j_1, \ldots, j_{d-3}\}\subseteq\{1, \ldots, n\}$, and entry equal to 1 if the column index is a subset of the row index, 0 otherwise. Arranging  the coefficients $b_{i_1\cdots i_{d-1}}$ in a vector ${\bf b}$ with entries indexed by the subsets $\{i_1, \ldots, i_{d-1}\}$, 
(\ref{rel}) shows that $Q\in I_n$ if and only if the vector of coefficients ${\bf b}$ is in the vector space spanned by the columns of $\mathcal C$. We have $\displaystyle \left(\begin{array}{c} n\\ d-3\\ \end{array}\right) < \left(\begin{array}{c} n\\ d-1\\ \end{array}\right)$, and therefore there are certain linear equations (with coefficients depending only on $n$ and $d$ ) that are satisfied among the $b_{i_1\cdots i_{d-1}}$.

On the other hand, we can view the coefficients of $Q$ as polynomials in the variables $a_1, \ldots, a_n$. Note that $b_{i_1\cdots i_{d-1}}$ has degree 2 in the variables $a_{i_1}, \ldots, a_{i_{d-1}}$, and degree 1 in all other variables. Therefore, $b_{i_1\cdots i_{d-1}}$ are linearly independent as polynomials in $a_1, \ldots, a_n$.
\end{proof}

We now discuss how the failure of injectivity proved in Theorem (\ref{square injectivity failure}) relates to Conjecture 2 from \cite{CriQ-Lun-Nen}:
\begin{conjecture}[\cite{CriQ-Lun-Nen}, Conjecture 2]\label{ext conj}
Let $\ell_1, \ell_2$ be general linear forms in $P_n$.
Then
$$
\mathrm{dim}\left(\left(\frac{P_n}{(x_1^2, \ldots, x_n^2, \ell_1^2, \ell_2^2)}\right)_d\right)=a_{n, d}
$$
where $a_{n, d}$ is the number of lattice paths from $(0, 0)$ to $(n+2-2s, n+2)$ with moves of the type $(i, j)\to (i+1, j+1)$ (right move) or $(i, j)\to (i-1, j+1)$ (left move) such that the first and the last move are to the right, and the path does not cross the lines $x=0$ and $x=n+2-2d$.
\end{conjecture}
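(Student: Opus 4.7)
The plan is to combine the established upper bound $\dim\le a_{n,d}$ from Theorem 8 of \cite{CriQ-Lun-Nen} with a matching lower bound $\dim\ge a_{n,d}$. Writing $P_n/(x_1^2,\ldots,x_n^2,\ell_1^2,\ell_2^2)$ as the quotient of $P_{n+1}/I_{n+1}$ by a further general linear form $\ell$, Observation \ref{Froberg} ties the lower bound to the injectivity of multiplication by $\ell$ from $(P_{n+1}/I_{n+1})_{d-1}$ to $(P_{n+1}/I_{n+1})_d$, so the conjecture splits naturally according to whether that injectivity holds.

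In the range $n\ge 3d-3$, equivalently $d\le \lfloor n/3\rfloor+1$, Proposition 7 of \cite{CriQ-Lun-Nen} identifies $a_{n,d}$ with the Fr\"oberg coefficient $\binom{n}{d}-2\binom{n}{d-2}+\binom{n}{d-4}$, and Theorem \ref{injectivity for squares} (applied with $n+1$ in the role of $n$) gives injectivity of $\times\ell$. Observation \ref{Froberg} then forces $\dim$ to equal the Fr\"oberg coefficient, which equals $a_{n,d}$, and the conjecture is resolved here. The remaining range is $n\le 3d-4$, where $a_{n,d}$ strictly exceeds the Fr\"oberg coefficient and Theorem \ref{square injectivity failure} already produces one witness $Q$ to the strict inequality $\dim>{}$(Fr\"oberg coefficient).

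In that hard range, the strategy is to generalize the explicit form $Q$ of Section 5 to a family $\{Q_\pi\}$ parametrized by the ``excess'' lattice paths in the $a_{n,d}$ count, namely those that also interact with the upper boundary $x=n+2-2d$. Combinatorially, one would match the cardinality of the family to $a_{n,d}$ minus the Fr\"oberg coefficient via a reflection-principle decomposition of the paths; algebraically, one would replace the scalar recursions defining $\varepsilon(t)$ and $\psi(t)$ by a system of recursions indexed by the boundary-crossing data of $\pi$, and verify that each $Q_\pi$ lies in the kernel of $\times L$ by an analogue of the identities in Observation \ref{epsilon and psi sums}. The principal obstacle is proving linear independence of the $Q_\pi$ modulo $I_{n+1}$: for a single element this followed from a polynomial-degree argument in the parameters $a_1,\ldots,a_{n+1}$, but for a family one must instead establish the non-vanishing of a suitable determinant, which seems to call for a Lindstr\"om--Gessel--Viennot-style identity or, as an alternative, a direct combinatorial analysis built on the sharp description of $\mathrm{in}(I_{n+1})$ from Theorem \ref{initial2} to pin down a basis of the quotient after modding out by a general linear form.
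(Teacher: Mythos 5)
You have not proved the statement, and it is worth being clear that the paper does not claim to either: Conjecture (\ref{ext conj}) is quoted from \cite{CriQ-Lun-Nen} and remains open; what the paper actually establishes is the single new case $n=3d-4$ (Corollary (\ref{tt cor})), the range $n\ge 3d-3$ having already been settled in \cite{CriQ-Lun-Nen}. Your first two paragraphs are correct but only reproduce that known range: the reduction of the lower bound to injectivity of the multiplication map $\phi_{n+1,d}\colon (P_{n+1}/I_{n+1})_{d-1}\to (P_{n+1}/I_{n+1})_d$ via Observation (\ref{Froberg}), combined with Proposition 7 of \cite{CriQ-Lun-Nen} and Theorem (\ref{injectivity for squares}), is exactly the argument sketched in Section 3. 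The genuine gap is your third paragraph: for $n\le 3d-4$ you offer a program --- a family $\{Q_\pi\}$ indexed by ``excess'' paths, recursions generalizing $\varepsilon$ and $\psi$, and a Lindstr\"om--Gessel--Viennot-type determinant for independence --- but you construct none of these objects, verify no kernel memberships, and you yourself flag linear independence as an unresolved obstacle. Steps phrased as ``one would match\ldots'' and ``one would replace\ldots'' are not a proof, and this program is precisely what the paper's closing remark identifies as open already for $n=3d-5$, where $T(n,d)=n$ and one would need $n$ linearly independent kernel elements.

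Within your own framework there is one case you could have closed completely, and it is the one the paper proves. By the dimension count in the proof of Corollary (\ref{tt cor}) together with inequality (\ref{tt}), the conjecture is equivalent to $\dim(\mathrm{Ker}(\phi_{n+1,d}))=T(n,d)$, where $T(n,d)$ counts the paths that cross $x=n+2-2d$ first and then $x=0$. For $n=3d-4$, a short bookkeeping argument on the three segments of such a path (adding the step-count equations forces every ``wrong-direction'' step to vanish) gives $T(3d-4,d)=1$, so the single explicit element $Q$ of Theorem (\ref{square injectivity failure}) --- applicable since $n+1=3d-3<3d-2$ --- already yields $1=T(n,d)\ge \dim(\mathrm{Ker}(\phi_{n+1,d}))\ge 1$, and no independence machinery is needed. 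Your proposal never computes $T(n,d)$ for any specific $n$, so it misses that your proposed ``family'' has cardinality one exactly there. The lesson is that the combinatorial count of excess paths is not an afterthought: it determines how many kernel elements must be produced, and it is what separates the provable case $n=3d-4$ from the cases that remain open.
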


The conjecture is proved in \cite{CriQ-Lun-Nen} for $n\ge 3d-3$. As a corollary of Theorem (\ref{square injectivity failure}), we are able to prove that the conjecture holds for $n=3d-4$, which is the first case of interest.
\begin{corollary}\label{tt cor}
    Conjecture (\ref{ext conj}) holds for $n=3d-4$.
\end{corollary}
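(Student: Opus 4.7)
The plan is to sandwich $\dim\bigl((P_{3d-4}/(x_1^2,\dots,x_{3d-4}^2,\ell_1^2,\ell_2^2))_d\bigr)$ between matching lower and upper bounds, both equal to $a_{3d-4,d}$. The upper bound $\dim \le a_{3d-4,d}$ is Theorem 8 of \cite{CriQ-Lun-Nen}, as recalled earlier in the paper.

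For the lower bound, I would set $R := P_{3d-3}/I_{3d-3}$, so that by Observation \ref{Froberg} the quotient $R/(L)$ for a general linear form $L$ is isomorphic to $P_{3d-4}/(x_1^2,\dots,x_{3d-4}^2,\ell_1^2,\ell_2^2)$. The exact sequence for $\times L$ gives
\begin{equation*}
\dim\bigl((R/(L))_d\bigr) = \dim R_d - \dim R_{d-1} + \dim\ker(\times L).
\end{equation*}
Remark \ref{HF of ACI} combined with a short application of Pascal's identity rewrites $\dim R_d - \dim R_{d-1}$ as the Fr\"oberg value $\binom{3d-4}{d} - 2\binom{3d-4}{d-2} + \binom{3d-4}{d-4}$, which is nonnegative for $d\ge 3$ (per the remark after Theorem \ref{square injectivity failure}). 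Since $3d-3 < 3d-2$, Theorem \ref{square injectivity failure} applies and produces a nonzero kernel element, so $\dim\ker \ge 1$ and the lower bound becomes Fr\"oberg $+\,1$.

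The heart of the matter, and the step I expect to be the main obstacle, is the combinatorial identity $a_{3d-4,d} = \binom{3d-4}{d} - 2\binom{3d-4}{d-2} + \binom{3d-4}{d-4} + 1$. I would establish it by the method of images applied to the lattice paths defining $a_{n,d}$. For $n = 3d-4$ the strip is $[0,d-2]$ and the endpoint is $(d-2,3d-2)$, which has the pleasant consequence that the conditions ``first move right'' and ``last move right'' are automatic, since the contrary choices would immediately step to $x = -1$ or come from $x = d-1$, both excluded by the strip. Iterating reflections across $x = -1$ and $x = d-1$ yields the alternating sum $\sum_{k\in\mathbb{Z}}\bigl[\binom{3d-2}{2d-2+kd} - \binom{3d-2}{2d-1+kd}\bigr]$; only the indices $k \in \{-1, 0, 1\}$ contribute nonzero terms, and the isolated $k = 1$ image $\binom{3d-2}{3d-2} = 1$ supplies precisely the $+1$ correction over the Fr\"oberg formula. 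Combining the matching bounds gives $\dim\bigl((R/(L))_d\bigr) = a_{3d-4, d}$, establishing Conjecture \ref{ext conj} in this case.
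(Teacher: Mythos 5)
Your proposal is correct and follows the same sandwich structure as the paper: upper bound from Theorem 8 of \cite{CriQ-Lun-Nen}, lower bound of ``Fr\"oberg value $+\,1$'' from the rank--nullity identity for $\times L$ on $P_{3d-3}/I_{3d-3}$ together with the nonzero kernel element supplied by Theorem (\ref{square injectivity failure}) (note $3d-3<3d-2$, so it applies). Where you diverge is in closing the gap between the two bounds. The paper does not compute $a_{3d-4,d}$ in closed form; it reuses the decomposition $a_{n,d}=\binom{n}{d}-2\binom{n}{d-2}+\binom{n}{d-4}+T(n,d)$ from the proof of Proposition 7 in \cite{CriQ-Lun-Nen}, where $T(n,d)$ counts paths crossing $x=n+2-2d$ and then $x=0$, and shows $T(3d-4,d)=1$ by an elementary step count: splitting such a path into three segments and adding the three parity equations forces $s_1=s_2=s_3=0$, so exactly one doubly-crossing path exists. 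You instead evaluate $a_{3d-4,d}$ directly by the strip reflection formula; your sum $\sum_{k}\bigl[\binom{3d-2}{2d-2+kd}-\binom{3d-2}{2d-1+kd}\bigr]$ is the correct one for walks of length $3d-2$ from $0$ to $d-2$ confined to $[0,d-2]$ (period $a+b=d$), the $k\in\{0,-1\}$ terms do combine to the coefficient of $t^d$ in $(1-t)^2(1+t)^{3d-2}$, i.e.\ the Fr\"oberg value, and the $k=1$ term contributes the extra $1$, so the identity checks out. The trade-off: the paper's route makes the correction term $T(n,d)$ visibly comparable to $\dim\ker(\phi_{n+1,d})$ and extends immediately to the next case $n=3d-5$ (where $T=n$, as the subsequent remark notes), whereas your reflection computation is self-contained but case-specific and quietly relies on the conventions of \cite{CriQ-Lun-Nen} (that ``does not cross'' means the path stays weakly inside the strip, so the first/last-move conditions are vacuous for the confined paths); you should flag that this matches the reference's definition before relying on it.
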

\begin{proof}
Let $P_{n+1}$ denote the polynomial ring $\F[x_1,\dots,x_{n+1}]$ and $I_{n+1}:=\big(x_1^2,\dots,x_{n+1}^2, (x_1+\cdots+x_{n+1})^2\big)$. Via a change of coordinates, we can take 
$\ell_1=x_1+\cdots + x_n$.
Consider the map $\phi_{n+1, d}$ given by multiplication by a general form $\ell $ from $(P_{n+1}/I_{n+1})_{d-1}$ to $(P_{n+1}/I_{n+1})_d$.
The cokernel of this map is isomorphic to $\displaystyle \left(\frac{P_n}{(x_1^2, \ldots, x_n^2, \ell_1^2, \ell_2^2)}\right)_d$. Therefore,
$$
\mathrm{dim}\left(\left(\frac{P_n}{(x_1^2, \ldots, x_n^2, \ell_1^2, \ell_2^2)}\right)_d\right)=
\mathrm{dim}\left(\left(\frac{P_{n+1}}{I_{n+1}}\right)_d\right)-
\mathrm{dim}\left(\left(\frac{P_{n+1}}{I_{n+1}}\right)_{d-1}\right)+\mathrm{dim}(\mathrm{Ker}(\phi_{n+1, d})).
$$
On the other hand, the proof of Proposition 7 in \cite{CriQ-Lun-Nen} shows that 
$$
a(n, d)=\left(\begin{array}{c} n\\ d\\ \end{array}\right)-2\left(\begin{array}{c} n\\ d-2\\ \end{array}\right) +\left(\begin{array}{c} n\\ d-4\\ \end{array}\right) + T(n,d)
$$
where we define $T(n, d)$ to be the number of paths that cross the line $x=n+2-2d$ first and then also cross the line $x=0$ later.
In view of Remark (\ref{HF of ACI}), Theorem 8 in \cite{CriQ-Lun-Nen} implies 
\begin{equation}\label{tt}T_{n, d}\ge \mathrm{dim}(\mathrm{Ker}(\phi_{n+1, d})),
\end{equation} and so the conjecture is equivalent to equality in (\ref{tt}).

We claim that when $n=3d-4$, $T_{n, d}=1$. To prove this, we argue along the same lines as in the proof of Proposition 7 in \cite{CriQ-Lun-Nen}. Indeed, a path that crosses first $x=n+2-2d$ and then $x=0$ consists of three segments; the first segment from $(1, 1)$ to $(n+3-2d, k_1)$ (the first time when the line $x=n+2-2d$ is crossed) the second segment from $(n+3-2d, 1+k_1)$ to $(-1, k_1+k_2)$ (the first time when the line $x=0$ is crossed) and the third segment from $(-1, k_2)$ to $(n+1-2d, k_1+k_2+k_3)$, where $k_1+k_2+k_3=n$ (the total number of steps). If the first segment has $s_1$ steps to the left, then $n+2-2d=k_1-2s_1$. If the second segment has $s_2$ steps to the right, then $n+4-2d=k_2-2s_2$. Finally, if the third segment  has $s_3$ steps to the left, then $n+2-2d=n-2s_3$.
Adding the three equations above and recalling $k_1+k_2+k_3=n$, we obtain $3n+8-6d=n-2(s_1+s_2+s_3)$. This implies $s_1=s_2=s_3=0$, and therefore such a path must consist of only steps to the right for the first segment, only steps to the left for the second segment, and only steps to the right for the third segment. In particular, only one such path is possible.

On the other hand, Theorem (\ref{square injectivity failure}) 
shows that when $n\le 3d-4$, $\mathrm{dim}(\mathrm{Ker}(\phi_{n+1,d})) \ge 1$. When this is paired with (\ref{tt}), we have
\begin{equation}
    1=T_{n, d}\ge \mathrm{dim}(\mathrm{Ker}(\phi_{n+1, d}))\ge 1
\end{equation}
and therefore equality follows.
\end{proof}
\begin{remark}
\normalfont{The next case of interest for Conjecture (\ref{ext conj}) is $n=3d-5$. As in the proof of Corollary (\ref{tt cor}), one can see that for this case, a path that crosses first $x=n+2-2d$ and then $x=0$ has exactly one step in one of the three segments in the direction opposite to the direction of the segment. Since this step can occur at any one of the $n$ steps, we have $T(n, d)=n$. In order to establish that Conjecture (\ref{ext conj}) holds for this case, one would need to be able to produce $n$ linearly independent elements in the kernel of $\phi_{n+1, d}$.
A potential way to accomplish this is by choosing various specializations of the one element that is known to exist in the kernel of $\phi_{n+2, d}$.}
\end{remark}

\end{document}